\theoremstyle{plain}
\newtheorem{corollary}{Corollary}
\newtheorem{thm}{Theorem}
\newtheorem*{Thm}{Main theorem}
\newtheorem*{tHm}{Theorem}
\newtheorem{lemma}{Lemma}[section]
\newtheorem{Prop}{Proposition}[section]
\theoremstyle{definition}
\newtheorem{definition}{Definition}[section]
\newtheorem{remark}{Remark}[section]
\newcommand{\z}{\mathbb{Z}}
\newcommand{\sg}{\Sigma}
\title{\sc{Unicellular maps and filtrations of the mapping class group}}
\author{Abdoul Karim SANE}
\date{ \small{}}
\begin{document}
\renewcommand{\proofname}{Proof}
\renewcommand{\abstractname}{Abstract}
\renewcommand{\refname}{Bibliography}
\maketitle
\begin{abstract} 
This article first answers to questions about connectedness of a new family of graphs on unicellular maps. Answering these questions goes through a description of the mapping class group as surgeries on unicellular maps. We also show how unicellular maps encode subgroups of the mapping group and provide filtrations of the mapping class group. These facts add a layer on the ubiquitous character of unicellular maps.  
\end{abstract}

\begin{flushright}
 \tiny{\emph{Repositioning  "old molecules" \\ to draw subgroups of the mapping class group.}}
\end{flushright}

\begin{section}{Introduction}
Throughout this article, $\sg_g$ denotes a closed oriented surface of genus $g$ and $\mathcal{MCG}(\sg_g)$ the \textit{\textbf{mapping class group}} of $\sg_g$\string: the group of isotopy classes of preserving orientation homeomorphisms.

A \textit{\textbf{unicellular map}} is a graph $G:=(V,E)$ embedded in $\sg_g$ such that ~$\sg_g-G$ is a topological disk.  The \textit{\textbf{degree partition}} of a unicellular map $G$ is the ordered list $d:=(d_1,...,d_n)$ of degrees of the vertices of $G$.  
 We denote by ~$\widetilde{\mathcal{U}}_{d,g}$ the set of isotopy classes of unicellular maps on $\sg_g$ of degree partition $d$. 
 There is a natural action of the mapping class group on ~$\widetilde{\mathcal{U}}_{d,g}$\string:
 \begin{align*}
\mathcal{A}:\mathcal{MCG}(\sg_g)\times\widetilde{\mathcal{U}}_{d,g}&\longrightarrow \widetilde{\mathcal{U}}_{d,g}\\
                     (\phi,G)&\longmapsto \phi(G).
\end{align*}
We denote by $\mathcal{U}_{d,g}$ the quotient space under this action. This amounts to consider unicellular maps up to homeomorphisms.

In \cite{Element}, we introduced a new operation on unicellular maps called \textit{\textbf{surgery}}. A unicellular map comes with a natural cyclic order on its oriented edges. 
 Given a unicellular map $G$ and two oriented edges $x$ and $y$ of $G$, there is a unique simple arc $\lambda_{x,y}$ (up to homotopy with end points gliding on $x$ and ~$y$) leaving ~$x$ from its right side and entering $y$ into its right side. A surgery on ~$G$ between ~$x$ and ~$y$ consists of a local modification of ~$G$ along ~$\lambda_{x,y}$ as shown in Figure \ref{cutop}.

\begin{figure}[htbp]
\begin{center}
\[
   \xymatrix{
\includegraphics[scale=0.08]{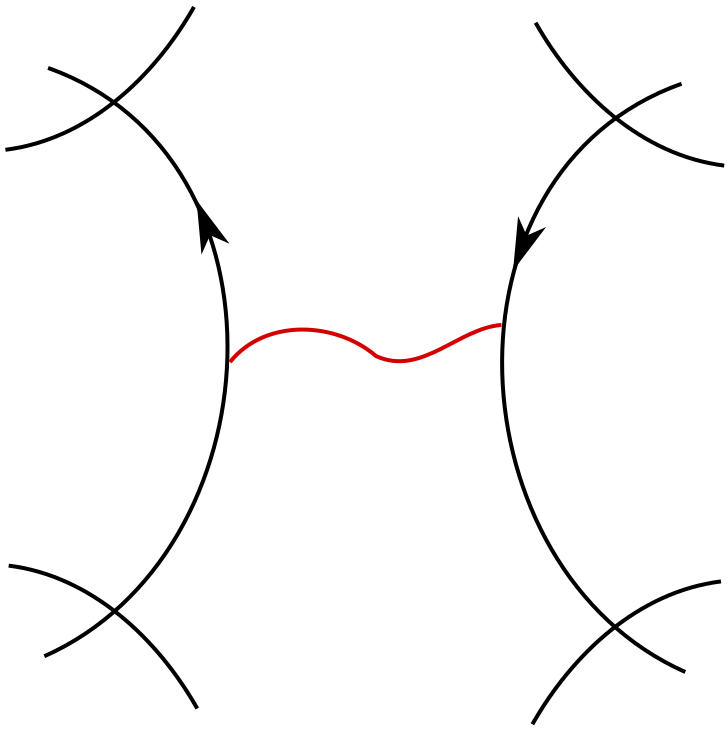}  &  &
\includegraphics[scale=0.08]{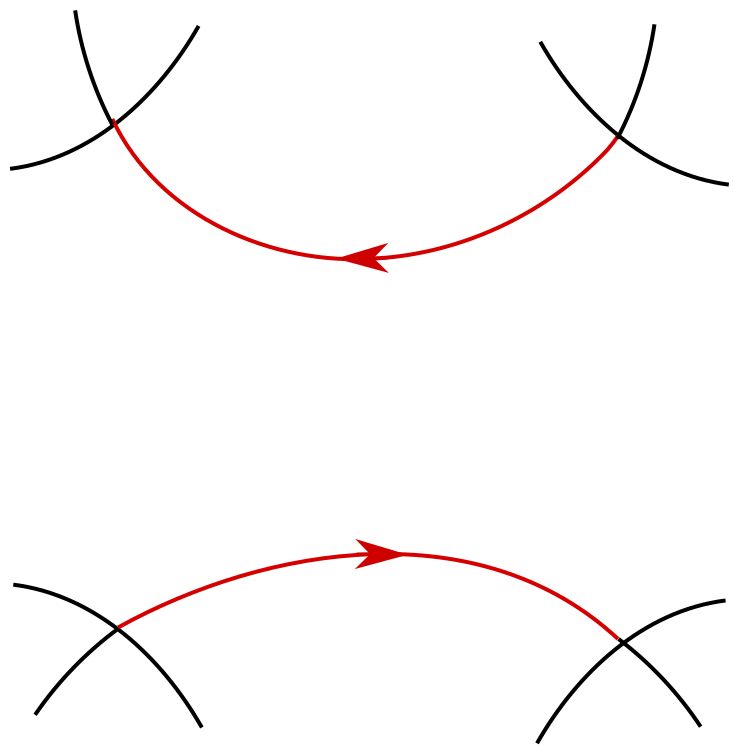}
\put(-57,16){\small{$x$}}
\put(-49,16){\small{$y$}}
\put(-55,8){\small{$\lambda_{x,y}$}}
\put(-37,9){\Large{$\longrightarrow$}}
\put(-53,-5){\small{$G$}}
\put(-15,-5){\small{$\sigma_{x,y}(G)$}}
}
\]
\caption{Surgery along ~$\lambda_{x,y}$.}
\label{cutop}
\end{center}
\end{figure}  
In \cite{Element} ---and we will recall it in Section \ref{sec2}---, we gave a necessary and sufficient condition (on how $x$ and $y$ appear in the cyclic order on edges)  for a surgery ~$\sigma_{x,y}(G)$ on $G$ between $x$ and $y$ to be a unicellular map. Since a surgery operation is between two oriented edges, the degree partition is left invariant.\vspace{0.2cm}

To the surgery operation, we associate two graphs\string:
\begin{itemize}
\item \textbf{\textit{the topological surgery graph $\widetilde{K}_{d,g}$}} whose vertices are elements of $\widetilde{\mathcal{U}}_{d,g}$, and whose edges are given by surgeries;
\item \textit{\textbf{the combinatorial surgery graph $K_{d,g}$}} whose vertices are elements of $\mathcal{U}_{d,g}$ with again surgeries as edges. 
\end{itemize}

The action of $\mathcal{MCG}(\sg_g)$ on $\widetilde{\mathcal{U}}_{d,g}$ extends to an action on $\widetilde{K}_{d,g}$ and the quotient graph is just $K_{d,g}$. Unlike the case of the other complexes (curves complex, pants complex...), combinatorial surgery graphs are big graphs since the number of homeomorphism classes of unicellular maps grows exponentially with the genus (see \cite{Chap}, \cite{Goup}, \cite{Walsh}).  

In \cite{Element}, we were interested on the coarse geometry of combinatorial surgery graphs ~$K_{d_1,g}$ for $d_1=\underbrace{(4,4,.....,4)}_{2g-1\hspace{0.1cm} \mathrm{times} }$. Since a regular 4-valent graph is equivalent to a collection of closed curves in generic position, we call elements of $\widetilde{\mathcal{U}}_{d_1,g}$ \textit{\textbf{unicellular collections}}. By  counting formulas due to Goupil and Schaeffer \cite{Goup}, we deduce that the number of unicellular collections is equivalent to $\displaystyle{\frac{(4g-2)!}{2^{2g-1}g!}}$. 

\begin{tHm}[S. \cite{Element}]\label{K4}
For every $g\geq1$, the combinatorial surgery graph ~$K_{d_1,g}$ is connected. Moreover, its diameter $D_g$ is zero for $g=1$ and satisfies the following inequality\string:
$$2g-1\leq D_g\leq3g^2+9g-12; $$
for $g\geq2$.
\end{tHm}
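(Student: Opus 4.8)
The plan is to treat the three assertions through a single mechanism: I would derive connectedness together with the upper bound from a \emph{reduction-to-normal-form} procedure, and obtain the lower bound from a quantity that varies slowly along edges of $K_{d_1,g}$. First I would fix a convenient combinatorial model for elements of $\mathcal{U}_{d_1,g}$. Since $\sg_g-G$ is a disk, each $G$ is recovered by gluing in pairs the $8g-4$ sides of a $(8g-4)$-gon (the boundary of the complementary disk), and the $2g-1$ four-valent vertices record how consecutive sides are grouped. In this model a surgery $\sigma_{x,y}$ is a local, combinatorially explicit move whose admissibility is governed by the criterion recalled in Section \ref{sec2}. I would then single out a \textbf{normal form} $G_\star$, a standard ``chain of handles'' unicellular collection, and show that every $G$ can be carried to $G_\star$ by admissible surgeries, simplifying the gluing pattern one handle at a time. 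If each handle is standardized in $O(g)$ surgeries, the whole reduction costs at most a quadratic number of moves; connectedness of $K_{d_1,g}$ is then immediate, and the triangle inequality $d(G,G')\le d(G,G_\star)+d(G_\star,G')$ yields the stated bound $D_g\le 3g^2+9g-12$.

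For the lower bound I would use the number $c(G)$ of connected components of the curve collection underlying a four-valent map $G$, obtained by following the two strands straight through each crossing (equivalently, by pairing opposite darts at every vertex). Because a surgery reconnects two strands along $\lambda_{x,y}$, it either merges two components or splits one, so $|c(\sigma_{x,y}(G))-c(G)|\le 1$; thus $c$ is $1$-Lipschitz along the edges of $K_{d_1,g}$. It then suffices to realize two collections whose component numbers differ by $2g-1$: a single filling curve with $2g-1$ self-intersections realizes $c=1$, while $2g$ pairwise-transverse \emph{simple} curves whose crossing pattern forms a tree (using all $2g-1$ crossings between distinct curves, so that the complement is a single disk) realizes $c=2g$. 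Hence $D_g\ge (2g)-1=2g-1$ for $g\ge2$. This is consistent with the excluded case: for $g=1$ the unique admissible map has $c=2$, so $c=1$ is not realized and there is no spread to exploit.

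For $g=1$ the degree partition is $(4)$, and there is exactly one one-vertex, one-face, four-valent map up to homeomorphism (the interleaved gluing $a\,b\,a'\,b'$); therefore $K_{d_1,1}$ is a single point and $D_1=0$.

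I expect the main obstacle to be the upper bound. One must design the reduction so that every intermediate move is admissible in the sense of Section \ref{sec2} and genuinely decreases a chosen complexity measure, and then control the cumulative number of surgeries sharply enough to produce the explicit constant $3g^2+9g-12=3(g-1)(g+4)$ rather than a merely polynomial bound; the vanishing of this expression at $g=1$ suggests an induction whose per-handle cost is linear in $g$. A secondary technical point, which I would verify directly from the local picture of $\sigma_{x,y}$, is the $1$-Lipschitz property of $c$, together with the simultaneous realizability of the extremal values $c=1$ and $c=2g$ within the same fixed genus.
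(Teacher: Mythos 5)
This theorem is not proved in the present paper at all: it is stated with the attribution ``S.~\cite{Element}'' and used as a black box, so there is no internal proof to compare your proposal against; it has to stand on its own. Two of its three parts essentially do. The $g=1$ case is correct: the word $ab\bar{a}\bar{b}$ gives the unique one-vertex, one-face, $4$-valent map up to homeomorphism, so $K_{d_1,1}$ is a point. The lower-bound mechanism is also sound: the number $c(G)$ of strands obtained by pairing opposite darts at each vertex is a homeomorphism invariant, a surgery only cuts the interiors of the two edges $x$, $y$ and reconnects the four loose ends along $\lambda_{x,y}$ (no new crossings, vertices untouched), so $c$ changes by at most $1$ along any edge of $K_{d_1,g}$; the chain of $2g$ simple closed curves with consecutive single crossings realizes $c=2g$ with one complementary disk. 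The point you flag but leave open---realizability of $c=1$, i.e.\ a filling curve with $2g-1$ self-intersections and a single complementary disk---is a real hypothesis of your argument, but it is fillable: for example on $\sg_2$ the Gauss code $121323$ admits a choice of crossing signs making the map one-faced, and analogous curves exist for every $g\ge 2$. Without that verification the lower bound is conditional, so you should treat it as part of the proof, not an afterthought.

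The genuine gap is the pair connectedness plus upper bound, which is the actual content of the theorem. Your ``reduction to a chain-of-handles normal form, one handle at a time, $O(g)$ surgeries per handle'' is a statement of intent rather than an argument: you never exhibit the admissible surgeries that make progress, nor a complexity function on gluing words that strictly decreases, nor why every intermediate move satisfies the intertwining criterion of Lemma \ref{lemsurg} (this admissibility constraint is precisely where naive reduction schemes break down, since the needed pair of edges may fail to be intertwined), nor where the explicit constant $3g^2+9g-12$ comes from. You acknowledge this yourself (``I expect the main obstacle to be the upper bound''), but that obstacle \emph{is} the theorem; everything else in your proposal is comparatively routine. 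For calibration, note that even the easier cubic analogue proved in this paper (Theorem \ref{K3}) is not obtained by a generic normal-form scheme: it needs Petersen's perfect-matching theorem together with the present statement as input. So, as written, your proposal establishes $D_1=0$ and (modulo the $c=1$ realizability) the inequality $D_g\ge 2g-1$, but not the connectedness of $K_{d_1,g}$ and not the quadratic upper bound.
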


\noindent In this article, we first slightly extend this result to $K_{d_0,g}$; where $d_0=~(3,..,3)$.
\begin{thm}\label{K3}
For every $g\geq1$, the graph $K_{d_0,g}$ is connected. 
\end{thm}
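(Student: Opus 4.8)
\section*{Proof plan}

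The plan is to deduce connectivity of $K_{d_0,g}$ from the connectivity of $K_{d_1,g}$ already established in Theorem \ref{K4}, through the classical correspondence between trivalent and $4$-valent maps given by edge contraction and vertex splitting. Contracting a non-loop edge of a unicellular map leaves the surface, the genus and the single-face condition unchanged, lowers the number of vertices by one, and merges two trivalent vertices into one $4$-valent vertex; conversely, splitting a $4$-valent vertex in one of the two ways compatible with the rotation system produces a trivalent map that is again unicellular. Since a cubic unicellular map of genus $g$ has $4g-2$ vertices while a unicellular collection has $2g-1$, contracting a perfect matching $M$ of non-loop edges of a cubic map $T$ yields a collection $T/M\in\mathcal{U}_{d_1,g}$, and every collection arises this way from a choice of splittings.

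I would organize the argument in three lemmas. First, a \emph{normalization} lemma: every cubic unicellular map is connected, by surgeries, to one carrying a perfect matching of non-loop edges, so that the contraction $T\mapsto T/M$ is defined (a Petersen-type argument produces a matching abstractly, but surgeries are needed to eliminate loops and bridges that would obstruct a non-loop matching whose contraction stays unicellular). Second, a \emph{lifting} lemma: if two collections $C,C'$ differ by a single surgery, then any two cubic maps splitting $C$ and $C'$ are joined by a sequence of surgeries; this is proved by writing the surgery on $C$ in the dart/rotation-system model and checking that, once the splittings are fixed, it is covered by an explicit bounded sequence of admissible trivalent surgeries. Third, an \emph{independence} lemma: for a fixed collection $C$, all cubic maps obtained by varying the splitting at each vertex are surgery-connected. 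Granting these, the path joining any two collections supplied by Theorem \ref{K4} lifts to a path joining the associated cubic maps, while normalization and independence let an arbitrary cubic map reach such a split map; connectivity of $K_{d_0,g}$ follows.

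The heart of the matter, and the expected main obstacle, is the realizability of these moves as admissible surgeries. A surgery is permitted only for pairs of oriented edges $(x,y)$ satisfying the cyclic-order condition recalled in Section \ref{sec2}, and trivalent vertices afford fewer admissible pairs than $4$-valent ones, so a single contraction-level surgery need not lift to a single trivalent surgery. The technical core is therefore to exhibit, for each of the two possible splittings and for the surgery under consideration, a finite sequence of admissible trivalent surgeries realizing it while leaving the complementary part of the map untouched, and to check that loops, bridges, and the base case $g=1$ (the theta-graph on the torus, handled directly) do not obstruct normalization. Once these local surgery sequences are produced and seen to terminate, the reduction is finite and connectivity of $K_{d_0,g}$ is immediate from Theorem \ref{K4}.
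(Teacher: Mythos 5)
Your plan follows essentially the same route as the paper: its Lemma \ref{perfmatch1} is your normalization step (bridge-eliminating surgeries followed by Petersen's theorem, Theorem \ref{pet}), and its Lemma \ref{perfmatch2} combines your lifting and independence steps by contracting the perfect matching to view the cubic map as a unicellular collection, invoking the connectivity of $K_{d_1,g}$ (Theorem \ref{K4}) with surgeries that avoid the matching, and reconciling the two possible splittings at each vertex by explicit extra surgeries. So your outline is correct and matches the paper's argument.
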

\noindent Theorem \ref{K3} will be useful for the proof of\string:

\begin{Thm} For every $g\geq3$, the topological surgery graph $\widetilde{K}_{d_0,g}$ is connected. Moreover, $\widetilde{K}_{d_0,g}$ is quasi-isometric to $\mathcal{MCG}(\sg_g)$.
\end{Thm}

Let us explain the relation between topological surgery graphs and the mapping class group. 

Let $G$ be an element of $\widetilde{\mathcal{U}}_{d,g}$\string:

\begin{definition} An homeomorphism $\phi\in\mathcal{MCG}(\sg_g)$ is \textbf{\textit{$G$-surgery compatible}} if and only if there is a sequence of surgeries from $G$ to $\phi({G})$.  
\end{definition}

We denote by $\mathcal{MCG}_G(\sg_g)$ the group of all $G$-surgery compatible homeomorphisms of $\sg_g$. If $G$ and $G'$ are two unicellular maps in the same connected component of $\widetilde{K}_{d,g}$, then $\mathcal{MCG}_G(\sg_g)=\mathcal{MCG}_{G'}(\sg_g)$. When the combinatorial surgery graph $K_{d,g}$ is connected, $\mathcal{MCG}_G(\sg_g)$ up to conjugacy, depends only on the parameters $(d,g)$ and we just denote by $\mathcal{MCG}_{d,g}(\sg_g)$ the group of surgery compatible homeomorphisms.

If  $\widetilde{K}_{d,g}$ is connected, then $K_{d,g}$ is also connected and the group of surgery compatible homeomorphisms is the whole mapping class group; and it is an equivalence.

The proof of Main theorem relies on two facts\string: the connectivity of $K_{d_0,g}$ and the fact that $\mathcal{MCG}_{d_0,g}(\sg_g)=\mathcal{MCG}(\sg_g)$.\vspace{0,3cm} 

From our main theorem one can read a presentation of $\mathcal{MCG}(\sg_g)$. Let  ~$\gamma$ be closed path on $K_{d_0,g}$ based on $G\in K_{d_0,g}$;  $\gamma$ is a sequence of surgeries $G_0=G\rightarrow G_1\rightarrow...\rightarrow G_n$ such that $[G]=[G_n]$. It defines a unique homeomorphism (up to the group $\mathrm{Sym}(G)$  of homeomorphisms which fixe ~$G$) ~$\phi_{\gamma}$ such that  $\phi_{\gamma}(G_n)=G$. It follows from Main theorem that the map

 \begin{align*}
\mathcal{R}:\pi_1(K_{d_0,g}, G)&\longrightarrow \mathcal{MCG}(\sg_g)_{|\mathrm{Sym}(G)}\\
                     \gamma &\longmapsto \phi_{\gamma};
\end{align*} 
is surjective. This map gives a presentation of the mapping class group where the generating set corresponds to loop of surgeries on cubic unicellular maps with implicit relations coming from relations between surgeries. 

As we will see after the proof of Main theorem, if one just wants to generate the mapping class group via surgeries on a unicellular map, it is not necessary for the map to be cubique. There are other degree partitions $d$ for which the graph $K_{d,g}$ is disconnected and such that $\mathcal{MCG}_{G}(\sg_g)=\mathcal{MCG}(\sg_g)$ for a map ~$G\in~\widetilde{\mathcal{U}}_{d,g}$. There are also degree partitions for which the combinatorial surgery graph is connected but the group of surgery compatible homeomorphisms is a proper subgroup of ~$\mathcal{MCG}(\sg_g)$. Let us see this closely.  

Given a unicellular map $G$ and a vertex $v_i$ of $G$ of degree $d_i$, one can obtained a new unicellular maps by splitting $v_i$ into two vertices $v_j$ and $v_k$ of degree $d_j$ and $d_k$ and the following is satisfied\string: $d_i=d_j+d_k-2$.
\begin{definition}
A degree partition $d=(d_1,...,d_n)$ of a unicellular map is \textbf{\textit{even}} if all the integer $d_i$ are even.  
\end{definition}

We show the following\string:
\begin{thm}\label{inv} Let $G$ be a unicellular map with even degree partition $d$. Then $\mathcal{MCG}_G(\sg_g)$ is a proper subgroup of the mapping class group.   
\end{thm}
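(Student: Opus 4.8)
The plan is to produce a surgery-invariant of even unicellular maps that the full mapping class group does not preserve. Since every vertex of $G$ has even degree, the mod-$2$ total chain $Z_G:=\sum_{e\in E}e$ is a $1$-cycle: its boundary is $\sum_v(\deg v)\,v=0$ over $\z/2\z$. Hence $G$ determines a well-defined class $[G]:=[Z_G]\in H_1(\sg_g;\z/2\z)$, and it is precisely the evenness hypothesis that makes this class exist. My first claim is that $[G]$ is invariant under surgery.

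To see this, recall that a surgery $\sigma_{x,y}$ modifies $G$ only inside a regular neighborhood $D$ of the arc $\lambda_{x,y}$, which is an embedded disk, and that it preserves the degree partition. Outside $D$ the two maps $G$ and $\sigma_{x,y}(G)$ coincide, so the mod-$2$ sum $Z_G+Z_{\sigma_{x,y}(G)}$ cancels there and is supported in $D$; moreover the arcs of $G$ and of $\sigma_{x,y}(G)$ meet $\partial D$ in the same points, so this sum has no boundary on $\partial D$, and it has no boundary at interior vertices because both $Z_G$ and $Z_{\sigma_{x,y}(G)}$ are cycles (again using evenness). Thus $Z_G+Z_{\sigma_{x,y}(G)}$ is a $1$-cycle contained in the disk $D$, hence null-homologous, and $[G]=[\sigma_{x,y}(G)]$ in $H_1(\sg_g;\z/2\z)$. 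Consequently any $G$-surgery compatible $\psi$ satisfies $\psi_*[G]=[\psi(G)]=[G]$, so $\mathcal{MCG}_G(\sg_g)$ stabilizes $[G]$.

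It remains to check that $[G]\neq0$ and that some homeomorphism moves it. Fixing a spanning tree $T$ of $G$, the $2g$ fundamental cycles $c_1,\dots,c_{2g}$ of the non-tree edges form a basis of $H_1(G;\z/2\z)$, and expanding $Z_G$ in this basis the coefficient of $c_i$ equals the coefficient of the $i$-th non-tree edge in $Z_G$, which is $1$; hence $[Z_G]=\sum_i c_i\neq0$. Since $G$ is a spine of $\sg_g$, the inclusion induces an isomorphism $H_1(G;\z/2\z)\xrightarrow{\ \sim\ }H_1(\sg_g;\z/2\z)$, so $[G]\neq0$ there as well.

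Finally, $\mathcal{MCG}(\sg_g)$ acts on $H_1(\sg_g;\z/2\z)\cong(\z/2\z)^{2g}$ through the surjection onto $Sp(2g,\z/2\z)$, which for $g\geq1$ is transitive on the $2^{2g}-1\geq3$ nonzero vectors. As the orbit of $[G]$ then has more than one element, there is $\phi\in\mathcal{MCG}(\sg_g)$ with $\phi_*[G]\neq[G]$; by the previous paragraph such a $\phi$ cannot be $G$-surgery compatible, and therefore $\mathcal{MCG}_G(\sg_g)$ is a proper subgroup. The only delicate point is the surgery-invariance of $[G]$: it must be read off carefully from the precise local model of $\sigma_{x,y}$ recalled in Section \ref{sec2}, to be sure the modification really is confined to a disk and matches $G$ along its boundary; once that is pinned down, the homological bookkeeping above is immediate.
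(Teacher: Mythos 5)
Your proof is correct, and it is essentially the Poincar\'e-dual twin of the paper's argument, implemented with different tools at both key steps. The paper's invariant is the function $I_G(\alpha)=i(G,\alpha)\bmod 2$ on simple closed curves; its surgery-invariance is proved by counting: a transverse representative of $\alpha$ meets $\sigma_{x,y}(G)$ in $k+2n$ points, and evenness of the degrees guarantees that the parity of transverse intersection is an isotopy invariant. Your class $[Z_G]\in H_1(\sg_g;\z/2\z)$ carries exactly the same information --- $I_G(\alpha)$ is the mod~$2$ intersection pairing of $[Z_G]$ with $[\alpha]$, and the paper itself remarks that $I_G$ is determined by a vector in $(\z/2\z)^{2g}$ --- but you establish invariance by a local homology computation (the symmetric difference $Z_G+Z_{\sigma_{x,y}(G)}$ is a cycle supported in a disk, hence null-homologous), which neatly sidesteps the minimal-position subtleties hidden in the counting argument. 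The endgames also differ: the paper exhibits explicit non-compatible mapping classes, namely $\tau_\alpha$ for any $\alpha$ with $i(G,\alpha)$ odd, via the parity change $i(\tau_\alpha(G),\beta)\equiv i(G,\beta)+i(G,\alpha)$ for $\beta$ dual to $\alpha$ (Corollary \ref{noncomp}), whereas you invoke surjectivity of $\mathcal{MCG}(\sg_g)\to \mathrm{Sp}(2g,\z/2\z)$ and transitivity of the symplectic group on nonzero vectors. Each route buys something: the paper's gives concrete elements outside $\mathcal{MCG}_G(\sg_g)$ (reused later, e.g.\ in the torus example), while yours fills in a point the paper leaves implicit --- the existence of a curve meeting $G$ an odd number of times, which is precisely your spanning-tree verification that $[Z_G]\neq 0$ together with the isomorphism $H_1(G;\z/2\z)\cong H_1(\sg_g;\z/2\z)$ (valid because the boundary walk of the unique face traverses each edge twice, so the $2$-cell attaches along a mod-$2$ trivial cycle).
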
  
Theorem ~\ref{inv} above implies that for even degree partitions, the topological surgery graph $\widetilde{K}_{d,g}$ is not connected and the graph $\widetilde{K}_{d_1,g}$ is one of them. We prove it by constructing an invariant of surgery on unicellular maps with even degree partition.

The splitting vertex operation associated to Main theorem give the following\string:
\begin{thm}\label{filt} For every $g\geq2$, there is a filtration of the mapping class group associated to each vertex splitting sequence starting at a one vertex unicellular map and ending to a cubic unicellular map. Moreover, for a suitable choice of the initial map $G$, one can start the filtration with a free group.    
\end{thm}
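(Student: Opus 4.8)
The plan is to read off the filtration directly from a vertex splitting sequence $G_0\to G_1\to\cdots\to G_N$, where $G_0$ is a one vertex map (degree partition $(4g)$, since a single face forces $E=2g$ hence a vertex of degree $4g$) and $G_N$ is cubic (degree partition $d_0=(3,\dots,3)$, with $4g-2$ vertices). To each intermediate map $G_i$ I attach its group $\mathcal{MCG}_{G_i}(\sg_g)$ of surgery compatible homeomorphisms, and I claim the splitting sequence makes these nest:
$$\mathcal{MCG}_{G_0}(\sg_g)\subseteq \mathcal{MCG}_{G_1}(\sg_g)\subseteq\cdots\subseteq \mathcal{MCG}_{G_N}(\sg_g).$$
The top of this chain is the whole mapping class group by the Main theorem (the cubic case $d_0$), while the bottom is a proper subgroup by Theorem \ref{inv}, since $(4g)$ is an even degree partition. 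So the chain is a genuine finite filtration of $\mathcal{MCG}(\sg_g)$, strictly growing at least once.

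The technical heart is the nesting, which I would deduce from a single lifting lemma: if $G'$ is obtained from $G$ by splitting one vertex, then $\mathcal{MCG}_{G}(\sg_g)\subseteq\mathcal{MCG}_{G'}(\sg_g)$. Write $e$ for the edge of $G'$ created by the split, so that contracting $e$ recovers $G$ and every oriented edge of $G$ other than the half edges at the split vertex lifts canonically to $G'$. Given $\phi\in\mathcal{MCG}_G(\sg_g)$ and a witnessing surgery path $G=H_0\to\cdots\to H_m=\phi(G)$, I lift it one surgery at a time. Each move $\sigma_{x,y}\colon H_j\to H_{j+1}$ is local along the arc $\lambda_{x,y}$; away from the tracked split edge it lifts verbatim to a surgery $H_j'\to H_{j+1}'$ between the corresponding edges of $G'$, with $H_{j+1}'$ again a splitting of $H_{j+1}$. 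Carrying the split edge through the entire path from $H_0'=G'$ yields a surgery path ending at \emph{some} splitting $H_m'$ of $\phi(G)$.

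The subtlety, and the step I expect to be the main obstacle, is that the lift is not canonical when a surgery interacts with the split vertex: the arc $\lambda_{x,y}$ may pass on either side of $e$, so different lifts reach splittings of $\phi(G)$ that differ only in the combinatorial position of the split edge among the half edges at the tracked vertex. I would resolve this by proving that the \emph{fibre} over a fixed map --- all ways of splitting $H_m$ at the tracked vertex into the prescribed pair of degrees --- is connected by surgeries; this is a purely local statement around one vertex and should be extractable from the connectivity arguments underlying Theorem \ref{K3}. Since $\phi(G')$ is itself such a splitting of $\phi(G)=H_m$ (the image under $\phi$ of the chosen split of $G$), finitely many further fibre surgeries steer $H_m'$ to $\phi(G')$. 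This produces a surgery path $G'\rightsquigarrow\phi(G')$, so $\phi\in\mathcal{MCG}_{G'}(\sg_g)$; iterating along the whole sequence gives the filtration.

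For the ``moreover'' I would choose the one vertex map $G_0$ so that $\mathcal{MCG}_{G_0}(\sg_g)$ is free. The natural tool is the presentation map $\mathcal{R}\colon\pi_1(K_{(4g),g},G_0)\to\mathcal{MCG}_{G_0}(\sg_g)_{|\mathrm{Sym}(G_0)}$, built exactly as in the cubic case: its image is all of $\mathcal{MCG}_{G_0}(\sg_g)$ modulo the finite group $\mathrm{Sym}(G_0)$, and since it is the fundamental group of a graph, $\pi_1(K_{(4g),g},G_0)$ is free. It therefore suffices to pick $G_0$ so that $\mathcal{R}$ is injective, i.e. so that no nontrivial loop of surgeries on $G_0$ is realized by the identity mapping class. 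I would arrange this by selecting a $G_0$ whose admissible surgeries are supported on disjoint subsurfaces, so that the resulting mapping classes generate freely (e.g. by a ping-pong argument); then $\mathcal{MCG}_{G_0}(\sg_g)$ is free and the filtration starts with a free group, as claimed.
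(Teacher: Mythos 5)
Your skeleton is the paper's: the splitting sequence $G_0\to\cdots\to G_N$ induces the nested chain $\mathcal{MCG}_{G_0}(\sg_g)\subseteq\cdots\subseteq\mathcal{MCG}_{G_N}(\sg_g)=\mathcal{MCG}(\sg_g)$, with the Main theorem closing the top, and the substance lies in (i) the nesting lemma and (ii) exhibiting a one-vertex map whose group is free. Your mechanism for (ii), however, is self-defeating. If the admissible surgeries on $G_0$ are supported on \emph{disjoint} subsurfaces, then the mapping classes they produce (squares of Dehn twists, by Lemma \ref{lemimport}) pairwise \emph{commute}, so they generate a free \emph{abelian} group; ping-pong cannot apply to commuting elements, and $\z^n$ with $n\geq 2$ is never free. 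The paper does exactly the opposite: it takes the one-vertex map with word $x_1x_2\cdots\bar{x}_1\bar{x}_2\cdots$, in which no two distinct edges are intertwined but the edge-loops pairwise cross at the vertex ($x_i<x_j<\bar{x}_i<\bar{x}_j$); the only available surgeries are the false-vertex ones realizing the $\tau^2_{\alpha_{x_i}}$, and freeness is proved dynamically --- a reduced sequence of such surgeries acts nontrivially because each surgery arc must follow the meanders of the previous one, so the map strictly complicates --- giving $\mathcal{MCG}_{G_0}(\sg_g)\simeq F_g$. The paper's second example in Section \ref{sec4} is precisely the warning against your choice: when intertwined pairs produce \emph{disjoint} curves (the word $x_1x_2\bar{x}_1\bar{x}_2\cdots$), one only gets $\z^2$ subgroups. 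Note also that injectivity of $\mathcal{R}$ alone cannot suffice: $\mathcal{MCG}_{G_0}(\sg_g)$ contains $\mathrm{Sym}(G_0)$ (a homeomorphism fixing $G_0$ is trivially compatible, via the empty path), a finite group, and a free group has no torsion; so the choice of $G_0$ must also kill, or account for, this symmetry group.

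On the nesting step you are in fact more scrupulous than the paper, which disposes of it with ``forgetting the new edges''; you rightly observe that the lifted path ends at \emph{some} splitting $H_m'$ of $\phi(G)$, not necessarily at $\phi(G')$. But you then discharge the mismatch by an unproven ``fibre connectivity'' claim that you call purely local and extractable from Theorem \ref{K3}. It is neither: connecting two splittings of the same map inside $\widetilde{K}_{d',g}$ is a global connectivity statement of exactly the same nature as Theorems \ref{K3} and \ref{K4} (each requiring a substantial proof), and the paper's parity invariant (Theorem \ref{inv}) shows that statements of this kind genuinely fail for even degree partitions --- and your chain starts at the even partition $(4g)$, with splittings into two even degrees perfectly possible. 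Since what you need is connectivity only for the specific pair (the carried-along edge $e$ versus $\phi(e)$), the claim may be salvageable, but as written it is an assertion, not an argument, and it carries the entire weight of the inclusion $\mathcal{MCG}_{G}(\sg_g)\subseteq\mathcal{MCG}_{G'}(\sg_g)$. In sum: one unproven lemma where the paper is equally silent, and one incorrect construction --- the latter is the fatal flaw.
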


\begin{paragraph}{Comparaison with others graphs\string:}
One way to understand properties of the mapping class group  is by analyzing its shadow in a complex associated to ~$\sg_g$, on which it acts. The curves complex $C(\sg_g)$ is one of those complexes on which the mapping acts. Masur and Minsky (\cite{Minsk1},\cite{Minsk2}) showed that ~$C(\sg_g)$ is infinite diameter, $\delta$-hyperbolic and they use the action of $\mathcal{MCG}(\sg_g)$ on ~$C(\sg_g)$ and hyperbolic machineries to study properties like words problem, conjugacy problem, quasi-isometry rigidity. Masur and Minsky also showed that the mapping class group is hierarchically hyperbolic; a weaker version of hyperbolicity.  The arcs complex (a generalization of the curves complex) and the pants graph are other graphs on which the mapping class group acts naturally.

Previously, Hatcher and Thurston \cite{HaThu} used the graph of \textit{\textbf{cut systems}}\string: collections of disjoint closed curves which cut $\sg_g$ into a sphere with $2g$ boundary components, to read off a finite presentation of the mapping class group.

So, finding a complex on which the mapping class acts happens to be useful. Either we deduce from that action properties on the mapping class group or we deduce from that action properties on the complex. Our construction follows this philosophy. The graphs we provide are closer to the mapping class group than the others mention above since they are not quasi-isometric to the mapping (they are not locally compact).

The flip graph on one-vertex triangulations (see \cite{Hatch} for flips on triangulations) of $\sg_g$ is somehow closer to ~$\widetilde{K}_{d_0,g}$, since their vertices are dual each other. But, a surgery on a cubic unicellular map corresponds to a sequence of flips on the triangulation dual to it; and vice-versa. A surgery needs to satisfy an intertwining condition on edges while each edge of a triangulation corresponds to a flip. The degree of vertices in flip graphs are larger than those in surgery graphs, and connectedness is less expected for the latest graph. Its known \cite{DisHug} that the flip graph on triangulations with $n$ vertices is quasi-isometric to $\mathcal{MCG}(\sg_{g,n})$\string: the group of isotopy classes of homeomorphism which fixe the marked points (with fixed marked points isotopies). By Birman exact sequence, $\mathcal{MCG}(\sg_{g,1})$ is a $\pi_1(\sg_g)$-extension of $\mathcal{MCG}(\sg_g)$. So our main theorem provides a graph which is quasi-isometric to $\mathcal{MCG}(\sg_g)$.

For other degree partitions, unicellular maps are dual equivalent to one-vertex cell decompositions of the surface and there are also elementary move associated to some of them (see \cite{Nak1}, \cite{Nak2} for moves on quadrangulations). When we restrict to one-vertex cell decompositions, surgery (on the dual) seems to be the most adapted moves since it is defined for all degree partitions.      
\end{paragraph}
\begin{paragraph}{Outline of the paper\string:} In section \ref{sec2}, we recall some notions about surgery on unicellular maps and we give the proof of Theorem ~\ref{K3}. Section ~\ref{sec3} is divided into three paragraphs. The first one deals with the proof of Main theorem. In the second paragraph, we construct a surgery invariant for the proof of Theorem \ref{inv}. The third paragraph composed with examples in Section \ref{sec4} can be taken as a proof of Thereom ~\ref{filt}.   
\end{paragraph} 
\end{section}
\begin{section}{Surgery on unicellular maps}\label{sec2}
In this sections, we recall some notions about surgery on unicellular maps introduced in \cite{Element} by us. It ends by a
the proof of Theorem \ref{K3}.

Let $G:=(V,E)$ be a unicellular map on $\sg_g$ and let $d=(d_1,...,d_n)$ be its degree partition. In this section, we assume that all vertices have degree greater than two (even though the question of surgery on unicellular maps makes sense for unicellular maps having degree 1 and degree 2 vertices). In section \ref{sec3}, it will be more convenient to consider \textit{\textbf{false vertices}} which are just degree two vertices on each edges of $G$. 
Using the Euler characteristic, one obtained the following relation between the number of vertices, the number of edges and the degree partition of $G$\string:  
\[|E|-|V|=2g-1; \quad \displaystyle{\sum d_i=2|E|.} \]

\noindent For instance, if $G$ is a cubic unicellular map\string: $|V|=4g-2$ and $E=6g-3$. Cubic unicellular maps have largest number of vertices and edges.

Now, if one walk along a unicellular map $G$ in such a way that at each vertex, we follow the edge just on the right, we obtained a \textit{\textbf{closed walk}} and any oriented edges of $G$ is followed exactly once. The closed walk give a cyclic order on oriented  edges of $G$. If we label each oriented edge of ~$G$ such that two orientations of the same edge have the same label with the bar symbole ~( ~$\bar{}$ ~) over one of them, we obtained a word ~$W_{G}$ of length ~$2|E|$. Up to cyclic permutation and relabelling, the word $W_G$ determines the homeomorphism class of $G$.

Let $x$ and $y$ be two oriented edges of $G$. There is a unique arc $\lambda_{x,y}$ ---up to homotopy with extremities gliding in $x$ and $y$--- leaving $x$ from its right and entering $y$ into its left. Let $G':=\sigma_{x,y}(G)$ be the graph obtained by modifying ~$G$ along $\lambda_{x,y}$ like in Figure \ref{cutop}. 
\begin{definition}
The oriented edges $x$ and $y$ are \textit{\textbf{intertwined}} if and only if the arc $\lambda_{x,y}$ and $\lambda_{\bar{x},\bar{y}}$ intersect once. It is equivalent to say that in the cyclic order on  oriented edges we see $x<\bar{x}<y<\bar{y}$ or $x<\bar{y}<y<\bar{x}$.
\end{definition} 
In \cite{Element}, we proved the following\string: 
\begin{lemma}[Card Shuffling]\label{lemsurg}
The graph $G':=\sigma_{x,y}(G)$ is unicellular if and only if $x$ and $y$ are intertwined and in this case. 

\noindent Moreover, if $w_1\bm{x}w_2\bm{\bar{x}}w_3\bm{y}w_4\bm{\bar{y}}$ is a word associated to $G$, then $w_3\bm{x}w_2\bm{\bar{x}}w_1\bm{y}w_4\bm{\bar{y}}$ is a word associated to $G'$.
\end{lemma}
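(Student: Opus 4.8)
The plan is to work entirely with the closed boundary walk of the single face, encoded by the word $W_G$, and to read the surgery as a re-splicing of this walk. Recall that $W_G$ is the cyclic list of the $2|E|$ oriented edges met while traversing the boundary of the disk $\sg_g-G$; equivalently it is the unique cycle $\tau$ of the permutation sending each oriented edge to the next one along the walk, the map being unicellular exactly because $\tau$ is a single cycle. I would first record a reduction turning ``unicellular'' into a cycle count. Since a surgery preserves the number of vertices and edges (the degree partition being invariant), the surgered graph $G':=\sigma_{x,y}(G)$ satisfies $\chi(\sg_g-G')=\chi(\sg_g)-\chi(G')=(2-2g)+(|E|-|V|)=(2-2g)+(2g-1)=1$. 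A connected surface with boundary and Euler characteristic $1$ is a disk, so $G'$ is unicellular if and only if its complement is connected, i.e. if and only if the boundary walk of $G'$ is a single closed walk. The whole statement thus reduces to counting the components of the new walk and, when there is one, reading it off.

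Next I would pin down the surgery as an operation on $\tau$. The surgery is supported in a neighbourhood of $x\cup y\cup\lambda_{x,y}$; away from it the walk is unchanged, and the edge set (hence the labelling of oriented edges and the pairing $e\leftrightarrow\bar e$) is preserved. Tracing the closed walk through the local picture of Figure~\ref{cutop}, one sees that the surgery cuts the walk at the four oriented edges $x,\bar x,y,\bar y$ and rejoins the ends crosswise. Writing a representative $W_G=w_1\bm{x}w_2\bm{\bar x}w_3\bm{y}w_4\bm{\bar y}$ already places the four darts in the order $x<\bar x<y<\bar y$, the first intertwined pattern; the rejoining dictated by the figure then interchanges the blocks lying between $\bar y$ and $x$ and between $\bar x$ and $y$, namely $w_1$ and $w_3$. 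At the level of $\tau$ this is right-multiplication by the product of two transpositions $\rho=(\bar x\ \bar y)\,(\tau^{-1}(x)\ \tau^{-1}(y))$, supported on $\bar x,\bar y$ and the two oriented edges immediately preceding $x$ and $y$ in the walk; crucially this rule is written intrinsically in terms of $\tau$ and the edge involution, so it governs every cyclic pattern of the four darts, not only the intertwined one.

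Finally I would count components. The first transposition moves two symbols lying in the single cycle $\tau$, hence splits it into exactly two cycles; the second transposition then merges these two back into one, or splits one of them further, so the surgered walk has either one or three components. The dichotomy is precisely the intertwining condition. When $x,y$ are intertwined, the two symbols moved by the second transposition lie in \emph{different} cycles of the intermediate permutation; the cycles merge, the walk is connected, $G'$ is unicellular, and tracing the single resulting cycle reads off exactly $w_3\bm{x}w_2\bm{\bar x}w_1\bm{y}w_4\bm{\bar y}$, proving the ``moreover'' clause. When $x,y$ are not intertwined (pattern $x<y<\bar x<\bar y$ or its mirror) those two symbols lie in the \emph{same} intermediate cycle; that cycle splits, the walk acquires three components, the complement is disconnected, and by the reduction of the first paragraph $G'$ is not unicellular. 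The second intertwined pattern $x<\bar y<y<\bar x$ and the mirror non-intertwined pattern are obtained from these by the symmetry reversing the orientation of one edge, so the two computations settle all cases.

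The hard part is the second paragraph: extracting the precise re-splicing rule from Figure~\ref{cutop} — which darts are the endpoints of the two transpositions and in which sense the loose ends are rejoined — and verifying that the resulting product $\rho$ is genuinely intrinsic, so that the \emph{same} permutation describes the surgery regardless of where $x,\bar x,y,\bar y$ sit in the cyclic word (this is what makes the single uniform cycle count legitimate, and in particular what makes the converse direction work). Once $\rho$ is fixed intrinsically, the split-then-merge cycle count and the read-off of the word are routine bookkeeping, with only the degenerate cases of empty blocks $w_i$ needing a separate but immediate check.
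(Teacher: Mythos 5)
You should know at the outset that this paper contains no proof of this lemma to compare against: it is stated with the preface ``In \cite{Element}, we proved the following,'' i.e.\ it is imported wholesale from the author's earlier paper, so your proposal can only be judged on its own merits. On those merits, your strategy is the natural one and it works: the Euler-characteristic reduction is correct (surgery preserves $|V|$ and $|E|$, so the complement of $G'$ has characteristic $1$, and unicellularity becomes equivalent to the facial walk being a single cycle), and encoding the surgery as right-multiplication of the face permutation $\tau$ by $\rho=(\bar{x}\ \bar{y})\,(\tau^{-1}(x)\ \tau^{-1}(y))$ is exactly right. I checked: with this $\rho$, both intertwined patterns $x<\bar{x}<y<\bar{y}$ and $x<\bar{y}<y<\bar{x}$ produce a single cycle whose reading is precisely $w_3\bm{x}w_2\bm{\bar{x}}w_1\bm{y}w_4\bm{\bar{y}}$, and all four non-intertwined patterns produce three cycles. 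Your identification of the crux --- that $\rho$ must be derived from the locality of the surgery (it is supported near $x\cup y\cup\lambda_{x,y}$, so the reconnection of the four strand-ends is independent of the global cyclic pattern) --- is also the right thing to insist on, though your proposal asserts rather than proves it.

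There is one step that fails as written: the closing symmetry reduction. Relabelling $y\leftrightarrow\bar{y}$ does not transport your computation for $\sigma_{x,y}$ in the pattern $x<\bar{x}<y<\bar{y}$ to the pattern $x<\bar{x}<\bar{y}<y$; it transports it to a statement about the \emph{different} surgery $\sigma_{x,\bar{y}}$, since the operation is attached to the chosen orientations (the arc $\lambda_{x,\bar{y}}$ is not $\lambda_{x,y}$). The symmetry that does fix the operation up to homeomorphism is reversing \emph{both} orientations, $\sigma_{x,y}\mapsto\sigma_{\bar{x},\bar{y}}$, and that only exchanges the two intertwined patterns with each other (and pairs up non-intertwined ones), so it cannot generate the missing cases from the two you computed. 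The repair is immediate within your own framework: since $\rho$ is intrinsic, simply run the cycle count on each of the six cyclic patterns of $\{x,\bar{x},y,\bar{y}\}$ directly --- each is a one-line check. Separately, in the degenerate cases (some $w_i$ empty, e.g.\ $\tau^{-1}(x)=\bar{y}$) the two transpositions share a support point, they no longer commute, and the product taken in the wrong order gives an incorrect permutation (it even creates a fixed point of the walk); so the ``separate but immediate check'' you promise there genuinely must fix the order of composition, not just re-run the same formula.
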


If $x$ and $y$ are two intertwined edges, so are $\bar{x}$ and $\bar{y}$; and by Lemma \ref{lemsurg}, there is an homeomorphism $\phi$ such that $\sigma_{\bar{x},\bar{y}}(G)=\phi(\sigma_{x,y}(G))$.

Now, we turn to the combinatorial surgery graph on cubic unicellular maps. Namely we prove Theorem \ref{K3}, and as we will see, it is a corollary of Theorem ~\ref{K4} already proved in \cite {Element}.

\begin{definition}
\begin{itemize}
\item An edge of a graph is a \textit{\textbf{bridge}} if its complement in the graph is disconnected. 
\item A \textit{\textbf{perfect matching}} of a graph is a maximal subset $P$ of edges such that any vertex of $G$ belong to an edge in $P$ and two different edges do not share a vertex. 
\end{itemize}
\end{definition}
The following theorem gives a sufficient condition for a cubic graph to admit a perfect matching.
\begin{thm}[Petersen, \cite{Pet}]\label{pet}
A cubic unicellular map with at most two bridges admits a perfect matching. 
\end{thm}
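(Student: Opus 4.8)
The plan is to forget the surface entirely and prove the statement as a fact about the underlying abstract graph, deducing it from Tutte's $1$-factor theorem: a finite graph $H$ admits a perfect matching if and only if $o(H-S)\leq|S|$ for every $S\subseteq V(H)$, where $o(H-S)$ is the number of connected components of $H-S$ having an odd number of vertices. The embedding is irrelevant; all I use is that $G$ is a connected cubic graph, connectedness being automatic since $G$ is unicellular (so $\sg_g-G$ is a disk). Thus I would fix an arbitrary $S\subseteq V$ and aim to establish $o(G-S)\leq|S|$.

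First I would record a parity observation. Since $G$ is cubic, the handshake lemma forces $|V|$ to be even. Decomposing $V$ as the disjoint union of $S$ with the vertex sets of the components of $G-S$, and noting that the even components each contribute an even number of vertices, one gets $|S|+o(G-S)\equiv|V|\equiv0 \pmod 2$. Hence $o(G-S)$ and $|S|$ have the same parity. This innocent-looking remark is what saves the argument at the end.

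Next I would count edges across the cut. Let $C_1,\dots,C_k$ be the odd components of $G-S$ and let $m_i$ be the number of edges joining $C_i$ to $S$. Summing degrees over the vertices of $C_i$ gives $3|C_i|=2e_i+m_i$, where $e_i$ is the number of edges internal to $C_i$; since $|C_i|$ is odd, each $m_i$ is odd, so in particular $m_i\geq1$. The key point is that if $m_i=1$ then the unique edge joining $C_i$ to $S$ is a bridge of $G$, and distinct odd components yield distinct bridges. As $G$ has at most two bridges, at most two of the $m_i$ equal $1$ and all remaining ones are $\geq3$. Counting edge-endpoints at $S$ shows $\sum_i m_i\leq 3|S|$, while the lower bounds give $\sum_i m_i\geq 3k-4$ (at most two terms equal to $1$, the rest at least $3$, so $\sum_i m_i\geq 3k-2\cdot 2$). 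Combining yields $3k-4\leq 3|S|$, that is, $k\leq|S|+\tfrac{4}{3}$, and since $k$ is an integer, $k\leq|S|+1$.

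The main obstacle — and the entire reason the hypothesis is stated with the constant two — is to improve $k\leq|S|+1$ to $k\leq|S|$, since the crude edge count is off by exactly one. This is precisely where the parity observation enters: as $o(G-S)=k$ and $|S|$ have the same parity, the value $k=|S|+1$ is impossible, leaving $k\leq|S|$. Because $S$ was arbitrary, Tutte's condition holds, and therefore $G$ admits a perfect matching. (I would also remark in passing that three bridges genuinely can destroy the conclusion, so the bound of two is sharp.)
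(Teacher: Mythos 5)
Your proof is correct, but there is nothing in the paper to compare it against: the paper does not prove this statement, it simply quotes it (transposed to the setting of unicellular maps) from Petersen's 1891 paper \cite{Pet} and then uses it as a black box in the proof of Lemma \ref{perfmatch1}. What you have written is the standard modern proof of Petersen's theorem via Tutte's $1$-factor theorem, and it is complete. Your opening reduction is also the right way to reconcile the graph-theoretic statement with the topological phrasing: unicellularity enters only through connectedness of the underlying abstract cubic graph (since $\Sigma_g - G$ is a disk, $G$ must be connected), and a perfect matching is a purely combinatorial object, so the embedding can indeed be discarded. The details all check: $m_i$ is odd because $3|C_i| = 2e_i + m_i$ with $|C_i|$ odd; an odd component joined to $S$ by a single edge produces a bridge, and distinct components produce distinct bridges; the count $3k-4 \leq \sum_i m_i \leq 3|S|$ gives $k \leq |S|+1$; and the parity relation $o(G-S) \equiv |S| \pmod 2$, coming from $|V|$ even, rules out $k = |S|+1$. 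What your approach buys is a self-contained treatment where the paper has only a citation, and it makes transparent why the threshold \emph{two} is exactly right (the edge count alone tolerates an excess of one odd component, and parity kills it). One caution on your closing parenthesis: the classical example of a cubic graph with three bridges and no perfect matching is an abstract graph, and sharpness \emph{within the class of unicellular maps} would additionally require realizing such a graph by a one-face embedding on some surface; that is a separate (and not entirely trivial) verification, though immaterial to the theorem as stated.
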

Starting from a unicellular collection $G$, one can get a cubic one $G'$ by splitting all his vertices into vertices of degree 3. Thus, $G'$ admits a perfect matching which is just the set of new edges obtained after the splitting process. But, not all cubic unicellular maps are obtained in this way.
\begin{definition} 
A cubic graph is a \textbf{\textit{virtual unicellular collection}} if it admits a perfect matching.  
\end{definition}
Virtual unicellular collections a closed to unicellular collection since they are obtained by a splitting vertex process. 

\begin{lemma} \label{perfmatch1}
Let $G$ be a cubic unicellular map. Then, there is a sequence $G_0=G\rightarrow G_1\rightarrow...\rightarrow G_n$ such that $G_n$ is a virtual unicellular collection. 
\end{lemma}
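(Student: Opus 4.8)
The plan is to reduce the number of bridges: I will show that any cubic unicellular map can be carried by surgeries to one with no bridge at all, at which point Theorem~\ref{pet} provides a perfect matching and hence exhibits it as a virtual unicellular collection. The engine is a single statement: \emph{if $G$ has at least one bridge, then one well-chosen surgery decreases the number of bridges by exactly one}. Iterating this gives the desired sequence $G_0=G\to\cdots\to G_n$. Throughout I use the dictionary between the boundary word $W_G$ and its chord diagram (join the two occurrences of each edge): an edge is a bridge of $G$ if and only if its two occurrences in $W_G$ delimit a balanced subword, equivalently its chord crosses no other chord. A crossed chord is thus exactly a non-bridge edge.

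To set up the decisive surgery, I would first select an \emph{innermost} bridge $c$, that is, one whose span (the subword between its two occurrences) contains no other bridge. The subgraph $H$ carried by this span is connected, is attached to the rest of $G$ only through $c$, and has no internal bridge; since $G$ is cubic there are neither monogons nor leaves, so $H$ has at least two vertices and is therefore $2$-edge-connected. Consequently \emph{every} edge of $H$ is a non-bridge of $G$, i.e. a crossing chord. Writing $W_G=A\,c\,B\,\bar c$ cyclically, with $A$ the span (so $A$ carries $H$), I take $x$ to be the very first edge of $A$ and $y$ the first edge of $B$, oriented so that $x<\bar x<y<\bar y$; this is legitimate because all of $A$ precedes all of $B$. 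These two edges are intertwined, so by the Card Shuffling Lemma~\ref{lemsurg} the graph $\sigma_{x,y}(G)$ is again a (cubic) unicellular map, and Lemma~\ref{lemsurg} gives its word explicitly.

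The heart of the argument is to check that this surgery changes the status of $c$ alone. Because $x$ is the first letter of $A$ and $y$ the first letter of $B$, the two blocks transposed by the shuffle carry no $A$-material before $x$ and no $B$-material before $y$; a direct inspection of the rewritten word then shows that the cyclic order of the $A$-endpoints among themselves, and of the $B$-endpoints among themselves, is left unchanged, so no crossing internal to $A$ or to $B$ is created or destroyed. The only new feature is that the surgery pushes the tail $\alpha_3$ of $A$ to the far side of $c$: an edge of $H$ crossing $x$ (one exists since every edge of $H$ is crossed) now has its two endpoints separated by $c$, so the chord of $c$ becomes crossing. Thus $c$ ceases to be a bridge while every other edge keeps its status, and the bridge count drops by exactly one.

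Iterating reaches a bridgeless cubic unicellular map, which admits a perfect matching and is therefore a virtual unicellular collection, completing the proof. The step I expect to demand the most care is this last bookkeeping: one must guarantee that the block transposition effected by the surgery does not inadvertently turn some other crossing chord into a non-crossing one. Choosing $x$ and $y$ as the \emph{extreme} edges of their respective sides is precisely what empties one end of each transposed block and thereby neutralises all side effects, so that the number of bridges is controlled \emph{exactly} rather than merely bounded.
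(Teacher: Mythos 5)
Your proof is correct, and at top level it runs the same induction as the paper's: use surgeries to destroy bridges one at a time, then apply Petersen's theorem (Theorem~\ref{pet}) to the resulting bridgeless cubic map. But your execution differs in two ways that are not cosmetic. The paper takes an \emph{arbitrary} bridge $e$ and \emph{arbitrary} edges $e_1\subset G_1(e)$, $e_2\subset G_2(e)$ adjacent to $e$, and asserts from Figure~\ref{ponts} that $e$ stops being a bridge after the surgery, so that the count drops to $k-1$. That assertion fails when $e_1$ and $e_2$ are themselves bridges: writing $W_G=\alpha_1 x\alpha_2\bar x\alpha_3\, e\,\beta_1 y\beta_2\bar y\beta_3\,\bar e$ and applying Lemma~\ref{lemsurg}, the new word is $\alpha_3 e\beta_1 x\alpha_2\bar x\beta_3\bar e\alpha_1 y\beta_2\bar y$, in which the span of $e$ is $\beta_1 x\alpha_2\bar x\beta_3$; this is balanced whenever $\alpha_2$ and $\beta_2$ are balanced, so $e$ is \emph{still} a bridge, and the total number of bridges is unchanged. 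Such configurations do occur in cubic unicellular maps (a central edge $e=(u,v)$ whose four neighbouring edges are bridges leading to four $2$-edge-connected blocks), and iterating the same choice loops forever. Your innermost-bridge selection is exactly the repair: it forces $x$ to be a crossed chord, any chord crossing $x$ lies in the balanced block $A$, and after the shuffle that chord crosses $c$, so $c$ genuinely becomes a non-bridge. Your second addition --- checking that no other edge changes status, which is where taking $x$ and $y$ to be the \emph{first} letters of their blocks matters --- is also needed, since otherwise nothing prevents new bridges from appearing and the induction has no reason to terminate. So your proposal is not a rephrasing of the printed argument; it is the printed argument made airtight, at the cost of the word-level bookkeeping. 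The two compressed steps in your text are both fine: the equivalence (bridge $\Leftrightarrow$ uncrossed chord) is a standard fact about unicellular maps, provable directly from the facial walk, and the existence of an edge of $H$ crossing $x$ follows because $x$ itself is crossed ($c$ being innermost, $x$ is not a bridge) and every chord crossing $x$ must lie in $A$.
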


\begin{figure}[htbp]
\begin{center}
\includegraphics[scale=0.18]{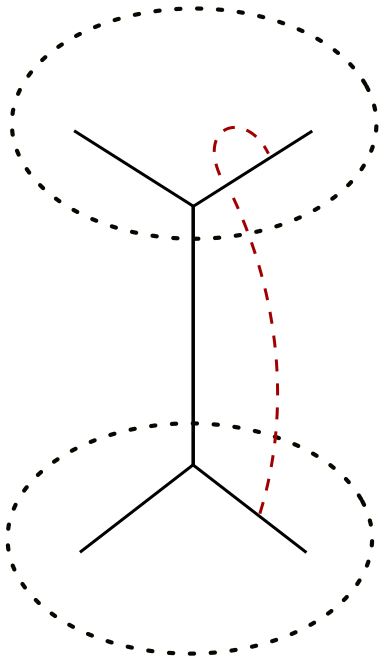}\hspace{3cm}
\includegraphics[scale=0.18]{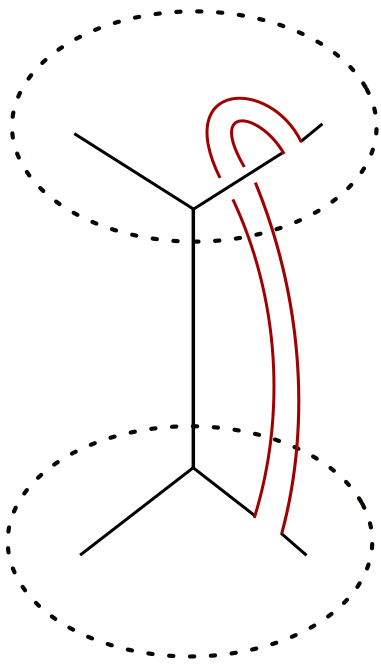}
\put(-70,20){\small{$e$}}
\put(-70,37){\small{$G_1(e)$}}
\put(-70,3){\small{$G_2(e)$}}
\put(-45,20){\huge{$\longrightarrow$}}
\caption{Reduction of the number of bridges by a surgery.}
\label{ponts}
\end{center}
\end{figure}

\begin{proof}
Let $k$ be the number of bridges in $G$. Let $e$ be a bridge of $G$, $G_1(e)$ and $G_2(e)$ the two components of $G-e$.

Let $e_1$ (respectively $e_2$) be one of the two edges of $G_1(e)$ (respectively $G_2(e)$) which has a common vertex with $e$. Since $e$ is a bridge, there is an orientation of $e_1$ ---let us denote it $x$--- which is intertwined with an orientation of $e_2$ ---which we denote $y$. 

The edge $e$ is no longer a bridge in $G_1:=\sigma_{x,y}(G)$ since the surgery produces another edges with extremities in $G_1$ and $G_2$ (see Figure ~\ref{ponts}). 

The number of bridges in $G_1$ is therefore equal two $k-1$. Repeating this process, we get a cubic unicellular map $G_n$ with no bridge; and then by Theorem \ref{pet},  $G_n$ is a virtual unicellular collection. 
\end{proof}
 
 Let us discuss the proof of Theorem \ref{K3} in genus $7$; for the general proof follows the same idea. Let $C_7$ be the unicellular collection in $\sg_7$ on Figure ~\ref{mitochondrie} (on the left), and $CV_7$ the cubic unicellular map obtained from ~$C_7$ by splitting all its vertices like on Figure \ref{mitochondrie} (on the right). The vertices in green are split in such a way that they give  bridge edges. 
\begin{figure}[htbp]
\begin{center}
\includegraphics[scale=0.22]{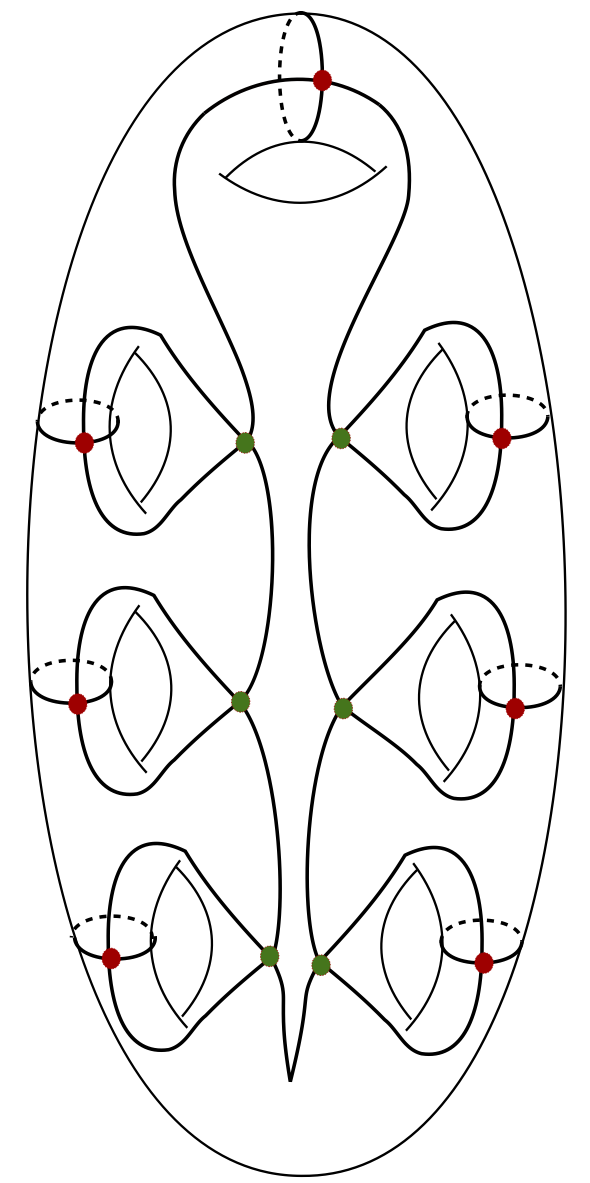}\hspace{3cm}
\includegraphics[scale=0.22]{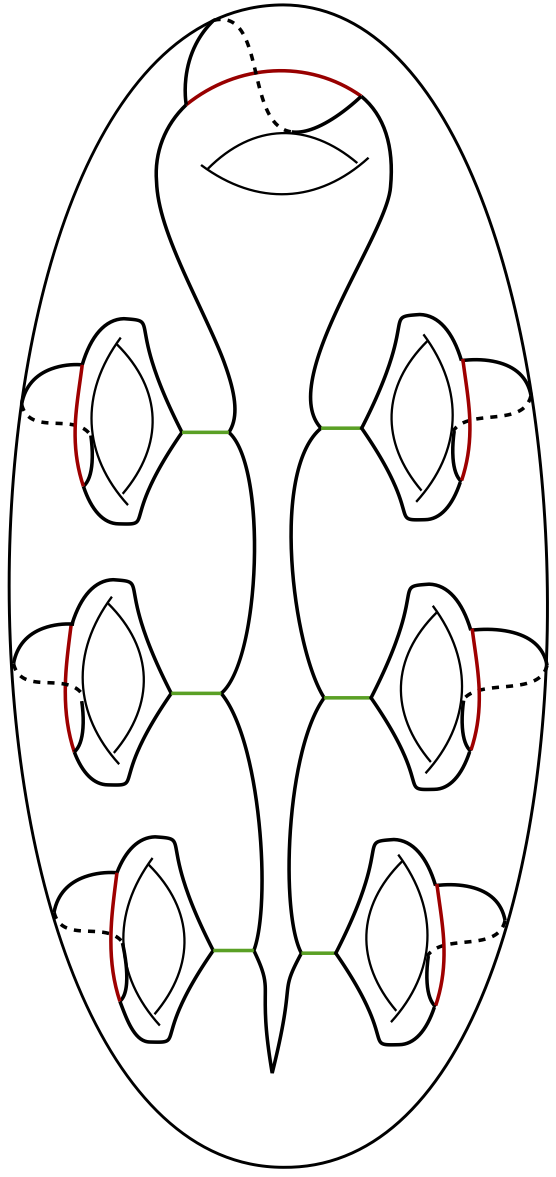}
\put(-43,18){\huge{$\longrightarrow$}}
\caption{}
\label{mitochondrie}
\end{center}
\end{figure}
\begin{lemma}\label{perfmatch2}
Let $G$ be a virtual unicellular collection. Then, there is a sequence of surgeries transforming $G$ into $CV_7$. 
\end{lemma}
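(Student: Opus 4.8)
The plan is to show that any virtual unicellular collection $G$ can be transformed, via surgeries, into the fixed model $CV_7$. Since a virtual unicellular collection is a cubic graph admitting a perfect matching $P$, and the edges of $P$ record exactly how each degree-$4$ vertex of an underlying unicellular collection was split, my strategy is to first reduce the question to the level of unicellular collections (degree partition $d_1=(4,\dots,4)$), where Theorem \ref{K4} already guarantees connectedness of $K_{d_1,7}$.

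First I would make precise the correspondence between virtual unicellular collections and unicellular collections. Given $G$ with perfect matching $P$, contracting each matching edge in $P$ merges the two degree-$3$ endpoints into a single degree-$4$ vertex, producing a $4$-valent graph $\widehat{G}$; conversely, splitting each vertex of $\widehat{G}$ along $P$ recovers $G$. The key point I would establish is that a surgery on $\widehat{G}$ (between two intertwined oriented edges) lifts to a \emph{sequence} of surgeries on $G$ that respects the matching, and symmetrically, so that a path in $K_{d_1,7}$ from $\widehat{G}$ to the target collection $C_7$ lifts to a path of surgeries from $G$ to some splitting of $C_7$. This is the step where I expect the real work to lie: a single surgery on the $4$-valent quotient must be realized by surgeries on the cubic level, and one must check that the intertwining condition of Lemma \ref{lemsurg} is preserved under the split/contract operations, keeping the matching edges available throughout so that the resulting cubic maps remain virtual unicellular collections.

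With that lifting mechanism in hand, the argument would run as follows. By Theorem \ref{K4}, $K_{d_1,7}$ is connected, so there is a sequence of surgeries taking $\widehat{G}$ to $C_7$. Lifting this sequence gives a sequence of surgeries taking $G$ to \emph{some} cubic map $G'$ whose contraction is $C_7$, i.e. $G'$ is $C_7$ with each vertex split in some manner. It then remains to pass between the different ways of splitting the vertices of $C_7$. Since all splittings of a single $4$-valent vertex into two $3$-valent vertices differ by the local choice recorded in the word $W_{C_7}$, and since any two such local configurations can be interchanged by a surgery supported near that vertex (possibly after an auxiliary surgery to free a bridge, as in Lemma \ref{perfmatch1}), one can adjust $G'$ into the standard model $CV_7$ in which the green vertices are split to produce bridges. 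Concatenating these surgeries yields the desired sequence from $G$ to $CV_7$.

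The main obstacle, as noted, is the faithful realization of a quotient surgery by cubic-level surgeries while maintaining a perfect matching at every intermediate step; one must verify that the words produced by the Card Shuffling Lemma on the cubic maps indeed correspond to virtual unicellular collections (equivalently, that the intermediate cubic maps keep at most two bridges, so Petersen's Theorem \ref{pet} continues to apply). Once this bookkeeping is controlled, the reduction to Theorem \ref{K4} makes the connectivity statement essentially formal.
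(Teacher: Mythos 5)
Your proposal takes essentially the same route as the paper\string: view the matched cubic map $(G,P)$ as a unicellular collection, apply Theorem \ref{K4} to reach a map obtained from $C_7$ by splitting its vertices, then correct the local splittings at the end (in the paper, the two splittings of each red vertex give the same map, and one surgery per green vertex produces the bridge configuration of $CV_7$). The lifting step you single out as the real work is in fact immediate, since a surgery on the $4$-valent quotient is realized by a \emph{single} surgery on $G$ between non-matching edges --- deleting the letters of the $P$-edges from the cyclic word preserves the relative cyclic order of the remaining letters, hence intertwining and unicellularity, and $P$ survives untouched as a perfect matching (so no bridge-counting or reapplication of Petersen's theorem is needed) --- which is exactly what the paper means by surgeries that do not touch edges in $P$.
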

\begin{proof}
Let $P$ be a perfect matching of $G$. By forgetting edges of $P$, one can see $(G,P)$ as a unicellular collection. By Theorem \ref{K4}, there is a sequence of surgeries (which does not touch edges in $P$) transforming $(G,P)$ into $(G_n,P_n)$ such that $G_n$ is obtained from $C_7$ by splitting its vertices. 
\begin{figure}[htbp]
\begin{center}
\includegraphics[scale=0.17]{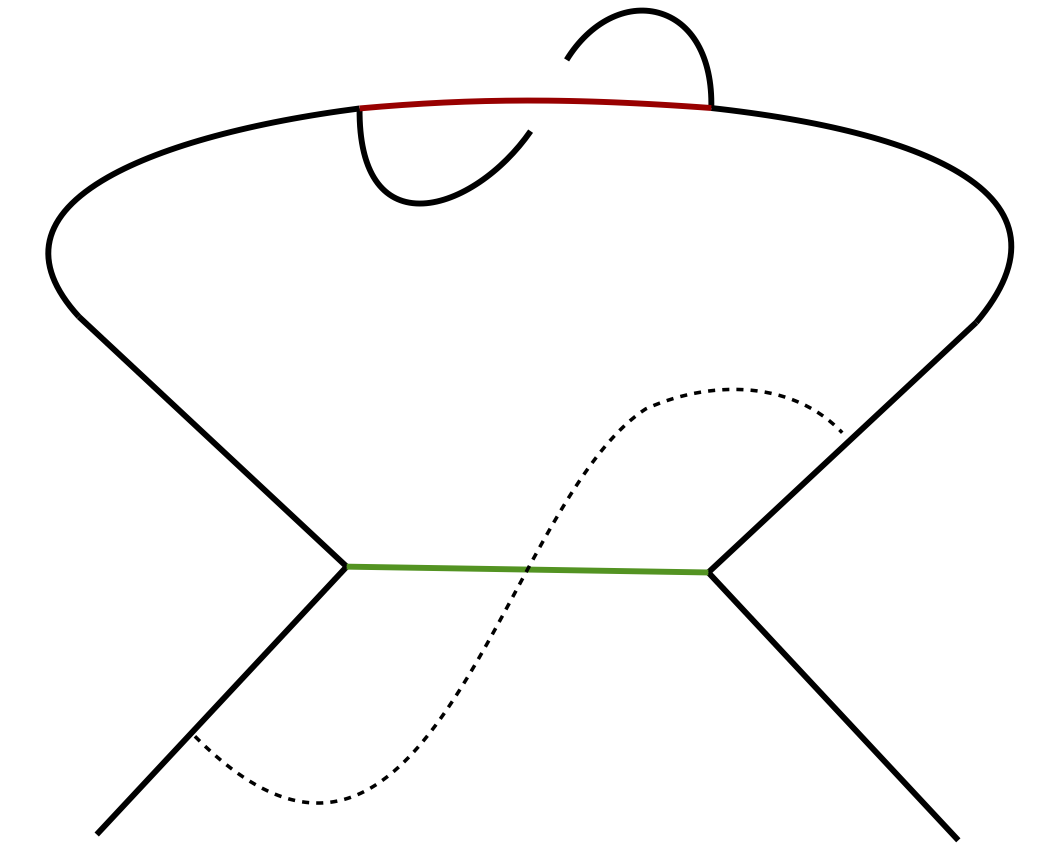}\hspace{3cm}
\includegraphics[scale=0.17]{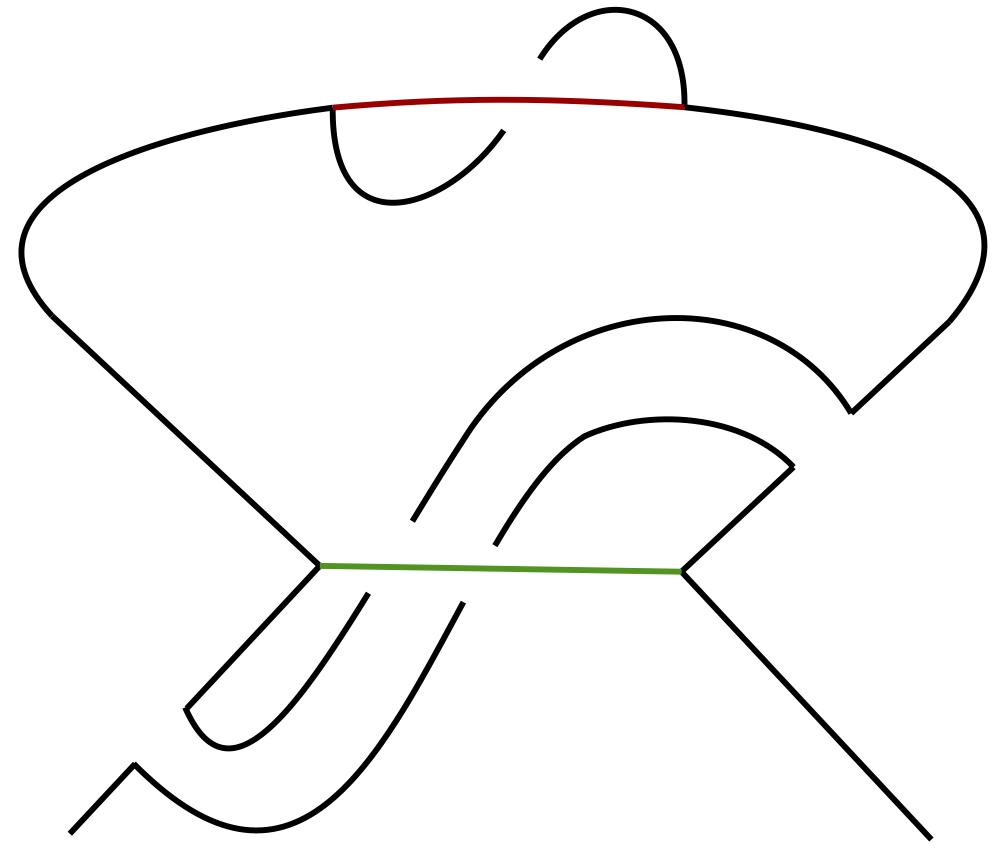}
\put(-51,7){\huge{$\longrightarrow$}}
\caption{A surgery which changes an edge into a  bridge.}
\label{..}
\end{center}
\end{figure}
The two ways on which one can split vertices of $C_7$ in red color lead to the same map. Also, the surgery depicted in Figure \ref{..} transforms an edge obtained by splitting a green vertex to a bridge edge, if it is not one. 

Therefore, there is sequence of surgeries from $G$ to $CV_7$ as claimed.   
\end{proof}
\begin{proof}[\textbf{Proof of Theorem \ref{K3}}]
Let $G$ be cubic unicellular map on $\sg_7$. By Lemma \ref{perfmatch1}, there is sequence of surgeries which transforms $G$ into $G_1$ such that $G_1$ admits a perfect matching. By Lemma \ref{perfmatch1}, there is sequence of surgeries which transforms $G_1$ into $CV_7$. So $K_{d_0,7}$ is connected. 
\end{proof}
\end{section}

\begin{section}{Surgeries and isotopy classes of homeomorphisms}\label{sec3}
This section start by the construction of explicit Dehn twists obtained by surgeries. Its ends with the proof of Main theorem. 

We recall that in this section, we assume that all unicellular maps have \textit{\textbf{false vertices}}\string: 2-valent vertices in each edges. 
 
Let $\mathrm{Homeo}^+(\sg_g)$  be the group of preserving orientation homeomorphism and $\mathrm{Homeo}^+_0(\sg_g)$ the subgroup of homeomorphism isotopic to identity. The mapping class group $\mathcal{MCG}(\sg_g):=\mathrm{Homeo}^+(\sg_g)/{\mathrm{Homeo}_0(\sg_g)}$ is the group of isotopy classes of preserving orientation homeomorphisms. Its well known (\cite{Dehn}, \cite{Lick}) that the mapping class group of $\sg_g$ is finitely generated by Dehn twist.  Humphries \cite{Humph} showed that $2g+1$ Dehn twists generated the mapping class group and $2g+1$ is the minimal number.    

Let $\widetilde{K}_{d,g}$ be a the topological surgery graph on $\widetilde{\mathcal{U}}_{d,g}$ and $K_{d,g}$ the combinatorial surgery graph. Let $G$ a fixed point in $K_{d,g}$, $\mathrm{Sym}(G)$ the group of homeomorphisms that fixe $G$, and $\gamma$ a closed path in $K_{d,g}$ based in $G$. 
A lift ~$\tilde{\gamma}$ of ~$\gamma$ in $\widetilde{K}_{d,g}$ gives a path from $\gamma(0)$ to $\gamma(1)$. There is a unique homeomorphism ~$\phi_{\gamma}$ (Up to $\mathrm{Sym}(G)$) such that $\gamma(1)=\phi_{\gamma}(\gamma(0))$. Therefore, the map 
 \begin{align*}
\mathcal{R}:\pi_1(K_{d_0,g}, G)&\longrightarrow \mathcal{MCG}(\sg_g)|\mathrm{Sym}(G)\\
                     \gamma &\longmapsto \phi_{\gamma},
\end{align*} 
 is well defined.
 \begin{definition}
 An homeomorphism $\phi$ is \textit{\textbf{$G$-surgery  compatible}} if $\phi=~\mathcal{R}(\gamma)$ for $\gamma\in\pi_1(K_{d_0,g},G)$. It is equivalent to say that $G$ and $\phi(G)$ are connected in the graph $\widetilde{K}_{d,g}$.     
 \end{definition}
 
 The subgroup of $G$-surgery compatible homeomorphisms $\mathcal{MCG}_G(\sg_g)$ is then generated by $\pi_1(K_{d_0,g}, G)$ and those generators come with relations between them.
 
\begin{paragraph}{Dehn twists as surgery compatible homeomorphisms\string:} Now, we show how we can obtained Dehn twists as $G$-surgery compatible homeomorphisms. 
 Let $x$ and $y$ be two intertwined edges in $G$. Then, if we take the arcs $\lambda_{x,y}$ (from $x$ to $y$) and $\lambda_{\bar{x},\bar{y}}$  (from $x$ to $y$) such that they end points are the same, we obtain an essential curve $\nu$ with one self-intersection. Let ~$\mathcal{P}(x,y)$ be the pair of pants generated by $\nu$; $\gamma$ the boundary component of $\mathcal{P}(x,y)$ which intersects $x$ and $y$ both, $\alpha$ the boundary component which intersects $x$, and $\beta$ the one which intersects $y$ (see Figure ~\ref{pant}).
\begin{figure}[htbp]
\begin{center}
\includegraphics[scale=0.25]{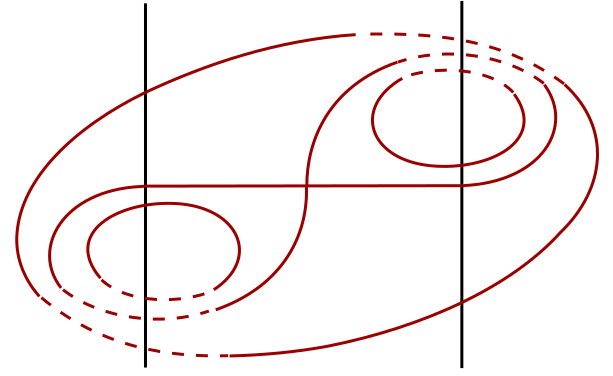}
\put(-37,18){\small{$\lambda_{x,y}$}}
\put(-28,10){\small{$\lambda_{\bar{x},\bar{y}}$}}
\put(-36,10){\small{$\alpha$}}
\put(-20,22){\small{$\beta$}}
\put(-55,13){\small{$\gamma$}}
\caption{Pair of pants generated by two intertwined edges $x$ and $y$.}
\label{pant}
\end{center}
\end{figure}
 
If $G':=\sigma_{x,y}(G)$ is the unicellular map obtained after a surgery on $G$ between $x$ and $y$, the oriented edges get transformed to two new oriented edges ---let us call them $x$ and $y$ too. By Lemma ~\ref{lemsurg}, $x$ and $y$ remain intertwined in $G'$. Moreover, if $w_1\bm{x}w_2\bm{\bar{x}}w_3\bm{y}w_4\bm{\bar{y}}$ is a word associated to $G$, then $w_3\bm{x}w_2\bm{\bar{x}}w_1\bm{y}w_4\bm{\bar{y}}$ is a word associated to $G'$ and $w_3\bm{x}w_4\bm{\bar{x}}w_1\bm{y}w_2\bm{\bar{y}}$ is a word associated to $\sigma_{\bar{x},\bar{y}}(G')$. The first and latest words are the same up to cyclic permutation and relabeling. Therefore $\sigma_{\bar{x},\bar{y}}(\sigma_{x,y}(G))$ defines a closed path based on $G$ in $K_{d,g}$ (See Figure \ref{involution} for the different steps of the path). 
\begin{figure}[htbp]
\begin{center}
\includegraphics[scale=0.14]{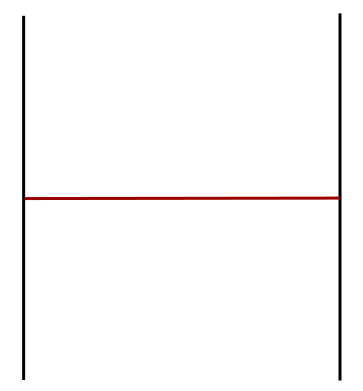}\hspace{1,5cm}
\includegraphics[scale=0.14]{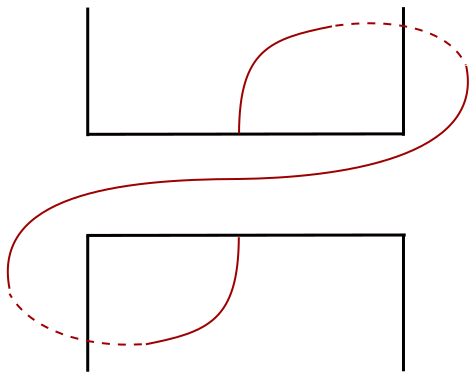}\hspace{1,5cm}
\includegraphics[scale=0.14]{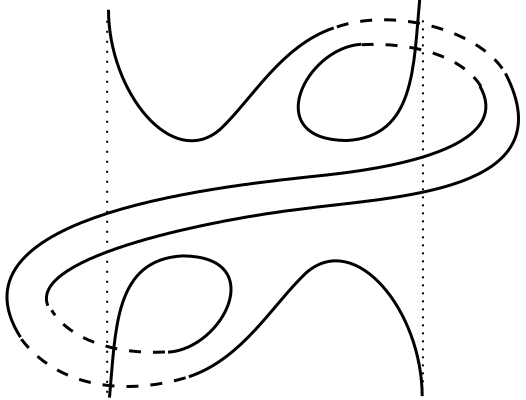}
\put(-80,6){\huge{$\rightarrow$}}
\put(-38,6){\huge{$\rightarrow$}}
\caption{}
\label{involution}
\end{center}
\end{figure}

Let us denote by $i_{x,y}$ this path and $\phi_{i_{x,y}}$ the homeomorphism defined by ~$i_{x,y}$. 
 \begin{lemma}\label{lemimport}
Let $i_{x,y}$ be the closed path in $K_{d,g}$ described above. Then,
 \[\phi_{i_{x,y}}=\tau^2_{\alpha}.\tau_{\gamma}.\tau^2_{\beta};\] 
 where $\tau_{\gamma}$, $\tau_{\alpha}$ and $\tau_{\beta}$ are Dehn twists along $\gamma$, $\alpha$ and ~$\beta$; respectively. 

 If $\omega$ is a simple curve in $\sg_g$ which intersects $G$ once, then $\tau^2_{\omega}$ is a $G$-surgery  compatible homeomorphism.  
 \end{lemma}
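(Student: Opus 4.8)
The plan is to prove the identity locally, inside the pair of pants $\mathcal{P}(x,y)$, and then to deduce the second assertion from it. First I would observe that both surgeries making up the loop $i_{x,y}$ are local modifications: $\sigma_{x,y}$ alters $G$ only inside a band neighborhood of $\lambda_{x,y}$ and $\sigma_{\bar x,\bar y}$ only inside a band neighborhood of $\lambda_{\bar x,\bar y}$, and both of these neighborhoods sit inside the regular neighborhood of the figure-eight curve $\nu=\lambda_{x,y}\cup\lambda_{\bar x,\bar y}$, which is precisely $\mathcal{P}(x,y)$. Outside $\mathcal{P}(x,y)$ the graph $\sigma_{\bar x,\bar y}(\sigma_{x,y}(G))$ coincides with $G$, so the homeomorphism $\phi_{i_{x,y}}$ carrying the former back to the latter can be chosen to be the identity on the complement of $\mathcal{P}(x,y)$. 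Hence $\phi_{i_{x,y}}$ represents a class in the mapping class group of the pair of pants rel boundary, which is the free abelian group of rank three generated by the boundary twists $\tau_\alpha,\tau_\beta,\tau_\gamma$. Thus $\phi_{i_{x,y}}=\tau_\alpha^{a}\tau_\beta^{b}\tau_\gamma^{c}$ for some integers $a,b,c$, and it only remains to compute these three exponents.

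To pin the exponents down, I would track the two seams of $\mathcal{P}(x,y)$, namely the arcs cut out of $x$ and $y$ by $\mathcal{P}(x,y)$, since a class in this $\mathbb{Z}^3$ is detected by the way it winds properly embedded arcs around the three boundary components. Using the card-shuffling description of Lemma~\ref{lemsurg}, I would rewrite the words of $G$, of $\sigma_{x,y}(G)$ and of $\sigma_{\bar x,\bar y}(\sigma_{x,y}(G))$, read off from the sequence in Figure~\ref{involution} how the two arcs are displaced, and count the resulting winding around $\alpha$, $\beta$ and $\gamma$. This bookkeeping is the technical heart of the argument: the exponents $(a,b,c)=(2,1,2)$ come out of correctly matching each surgery to a twist direction and orientation, and obtaining the exact powers, rather than merely the fact that $\phi_{i_{x,y}}$ is \emph{some} product of the three boundary twists, is where the care is required.

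For the second assertion I would use that a simple closed curve meeting $G$ exactly once is exactly the kind of curve that occurs as the boundary component $\alpha$ of such a pair of pants (recall $\alpha$ meets $G$ once, on the edge carrying $x$). So, given $\omega$ meeting $G$ once on an edge $e$, I would choose an orientation $x$ of $e$ and an edge $y$ intertwined with it so that $\mathcal{P}(x,y)$ has $\alpha$ isotopic to $\omega$; the first part then gives $\tau_\omega^{2}\,\tau_\gamma\,\tau_\beta^{2}\in\mathcal{MCG}_G(\sg_g)$. The remaining and genuinely delicate point is to isolate $\tau_\omega^2$ from this product, that is, to show that the complementary factor $\tau_\gamma\tau_\beta^{2}$ also lies in $\mathcal{MCG}_G(\sg_g)$; since $\mathcal{MCG}_G(\sg_g)$ is a group, this would immediately yield $\tau_\omega^{2}$. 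I expect this isolation step to be the main obstacle: a naive degeneration of the pair of pants does not work (one checks that trivializing boundaries forces powers $\tau_\omega^{3}$ or $\tau_\omega^{4}$ rather than $\tau_\omega^{2}$), so one must instead feed the curves $\beta$ and $\gamma$, which themselves meet $G$ once and twice, back into auxiliary surgery loops and arrange those loops so that their product realizes exactly $\tau_\gamma\tau_\beta^{2}$ and cancels the unwanted twists. Verifying that the required intertwined pairs exist for an arbitrary such $\omega$ is the crux of the matter.
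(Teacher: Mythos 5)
Your treatment of the identity $\phi_{i_{x,y}}=\tau_\alpha^2\tau_\gamma\tau_\beta^2$ is sound and is essentially the paper's argument made explicit: both surgeries in the loop $i_{x,y}$ are supported in the pair of pants $\mathcal{P}(x,y)$, so the resulting homeomorphism lies in the rank-three free abelian group generated by the three boundary twists, and the exponents are pinned down by a local computation (the paper carries out exactly this local computation pictorially, in Figure \ref{pantsuite}).

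The second assertion is where there is a genuine gap, and you have identified it yourself without closing it. Your plan is to realize $\omega$ as the boundary component $\alpha$ of some pair of pants $\mathcal{P}(x,y)$ and then cancel the factor $\tau_\gamma\tau_\beta^2$; but showing that $\tau_\gamma\tau_\beta^2$ is $G$-surgery compatible is not easier than the statement being proved --- in the paper, the compatibility of $\tau_\gamma$ (Corollary \ref{twist}) is itself deduced from the second part of this very lemma, by using $\tau_\alpha^2$ and $\tau_\beta^2$ to cancel the unwanted factors; so your route runs in a circle. The missing idea is the standing convention, stated at the beginning of Section \ref{sec3}, that all unicellular maps carry \emph{false vertices}: a $2$-valent vertex on each edge. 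This yields a direct, self-contained proof of the second assertion, independent of the first: if $\omega$ crosses $G$ exactly once, on the edge $e$, the false vertex splits $e$ into two oriented edges $x_1$ and $x_2$; the pair $(x_1,\bar{x}_2)$ is intertwined and the arc $\lambda_{x_1,\bar{x}_2}$ follows $\omega$; the single surgery $\sigma_{x_1,\bar{x}_2}$ then transforms $G$ into $\tau_\omega^2(G)$ (Figure \ref{square}), so $G$ and $\tau_\omega^2(G)$ are joined by an edge of $\widetilde{K}_{d,g}$ and $\tau_\omega^2$ is $G$-surgery compatible, with no auxiliary twists to cancel. Note also that your intermediate claim --- that for an arbitrary $\omega$ one can find $y$ intertwined with $x$ such that the boundary component $\alpha$ of $\mathcal{P}(x,y)$ is isotopic to $\omega$ --- only becomes clear once the false vertex is introduced (take $y=\bar{x}_2$), at which point the detour through the pair-of-pants identity is unnecessary.
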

\begin{proof}
If $x$ and $y$ are intertwined, they defined a pair of pants and the sequence $G\longrightarrow\sigma_{x,y}(G)\longrightarrow\sigma_{\bar{x},\bar{y}}(\sigma_{x,y}(G))$ is depicted in Figure \ref{pantsuite} and the homeomorphism follows. 
\begin{figure}[htbp]
\begin{center}
\includegraphics[scale=0.16]{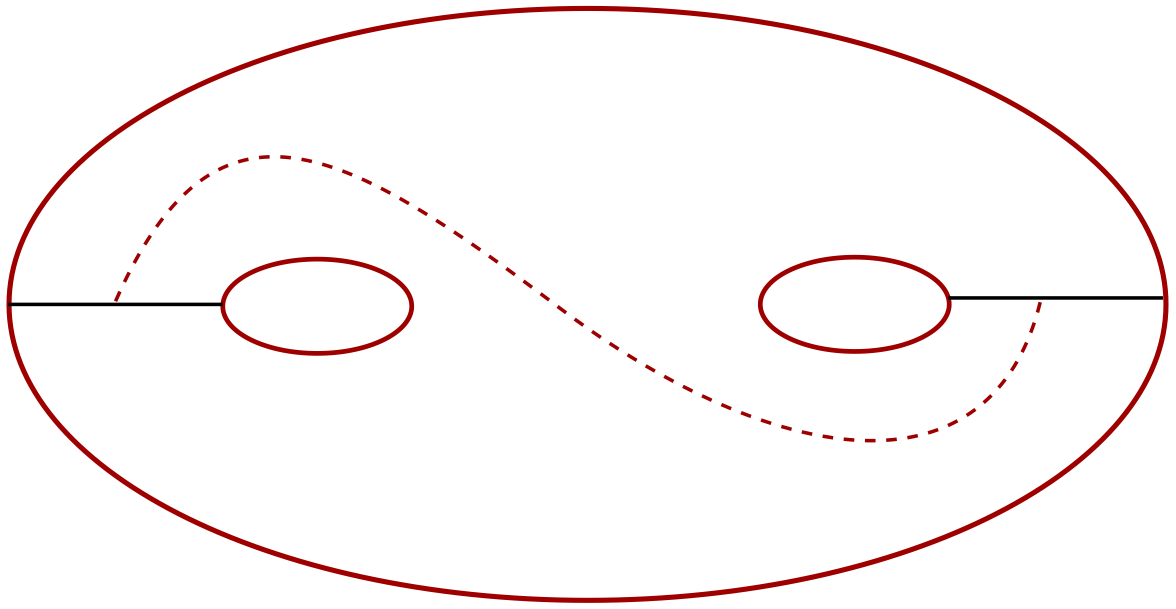}\hspace{0,4cm}
\includegraphics[scale=0.16]{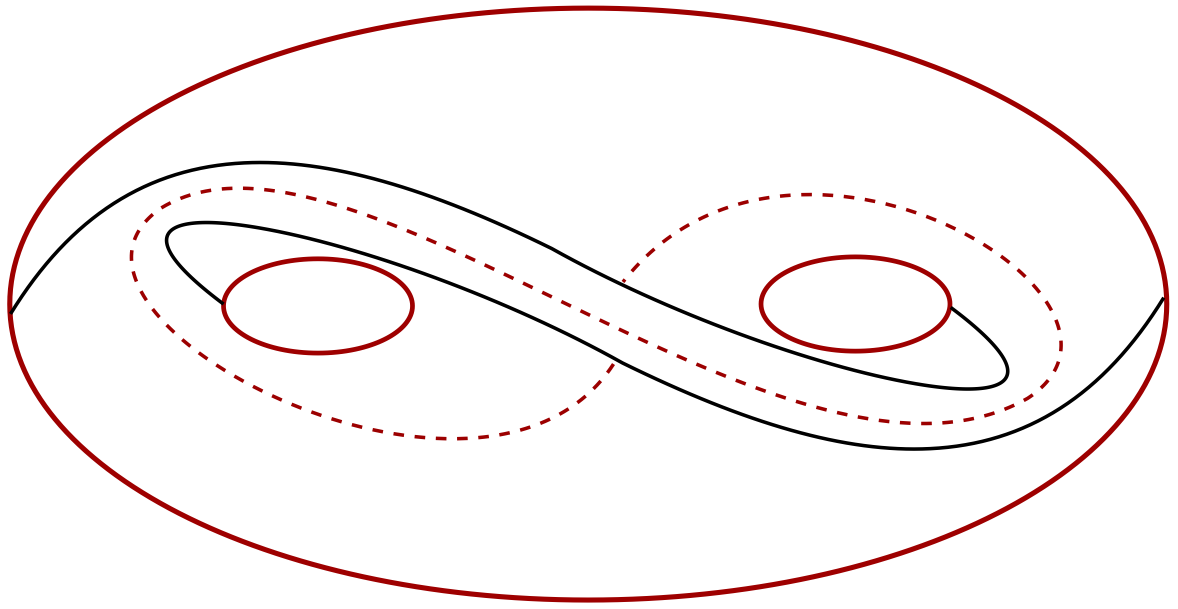}\hspace{0,4cm}
\includegraphics[scale=0.16]{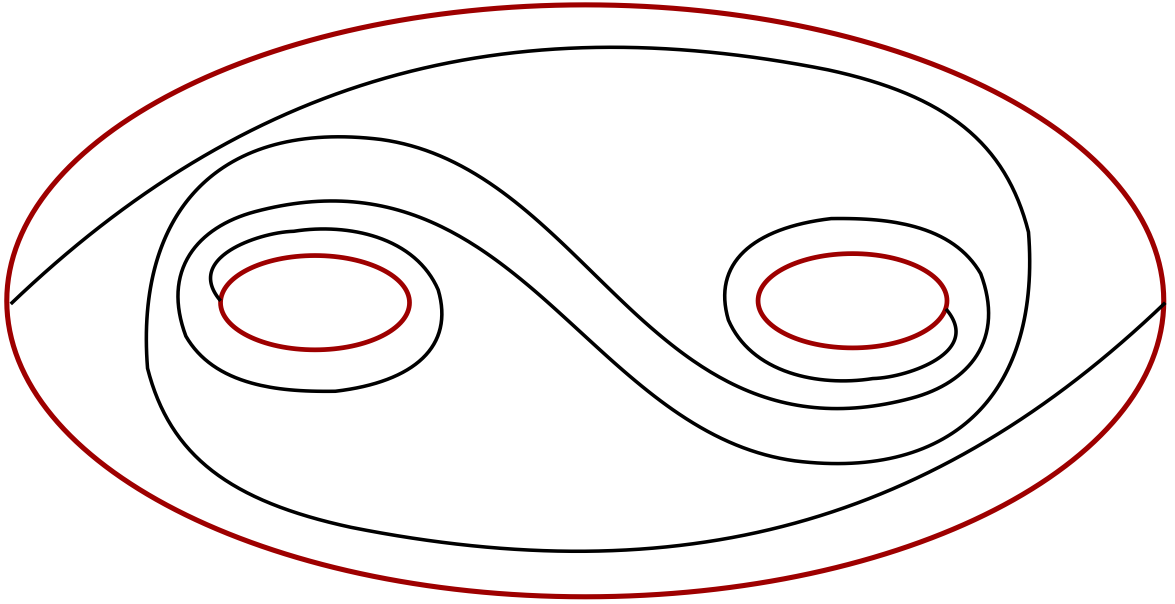}
\put(-74,7){\Large{$\rightarrow$}}
\put(-37,7){\Large{$\rightarrow$}}
\caption{}
\label{pantsuite}
\end{center}
\end{figure}

Let $\alpha$ be a simple curve that intersects $G$ once at $x$. If we consider the false vertex in $x$, the oriented edges $x$ breaks into two edges $x_1$ and $x_2$. The edges $x_1$ and $\bar{x}_2$ are intertwined and $\lambda_{x_1,\bar{x}_2}$ follows $\alpha$. The surgery on $G$ between $x_1$ and $\bar{x}_2$ gives the map $\tau^2_{\alpha}$ (See Figure ~\ref{square}).  

\begin{figure}[htbp]
\begin{center}
\includegraphics[scale=0.4]{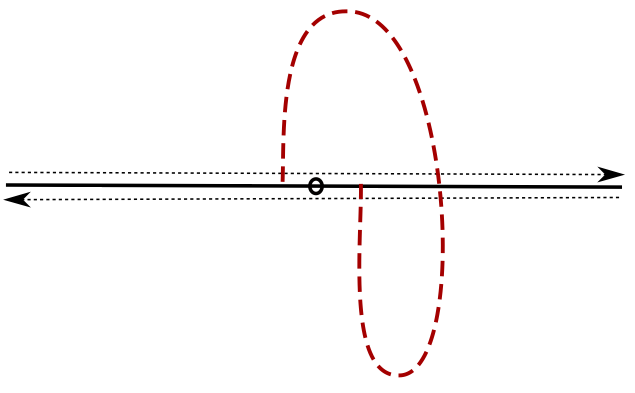}\hspace{2cm}
\includegraphics[scale=0.4]{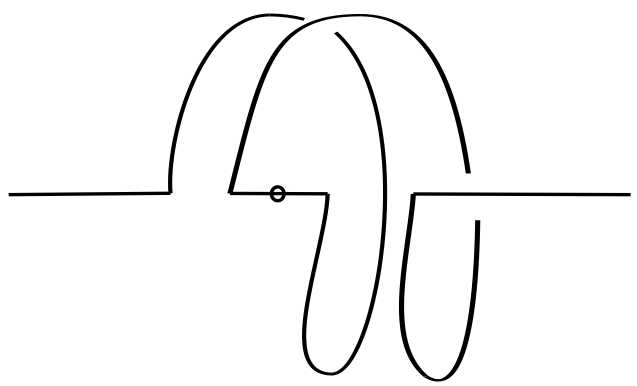}
\put(-58,12){\huge{$\rightarrow$}}
\put(-104,16){\small{$x_1$}}
\put(-104,10){\small{$\bar{x}_1$}}
\put(-74,16){\small{$x_2$}}
\put(-74,10){\small{$\bar{x}_2$}}
\put(-86,27){\small{$\lambda_{x,\bar{x}_2}$}}
\caption{}
\label{square}
\end{center}
\end{figure}

\end{proof}
\begin{corollary}\label{twist}
Let $x$ and $y$ be two intertwined edges of $G$ and $\gamma$ the unique simple closed curve which intersects $G$ twice, at $x$ and $y$. Then, $\tau_{\gamma}$ is a $G$-surgery  compatible homeomorphism.  
\end{corollary}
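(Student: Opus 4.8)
The plan is to extract $\tau_\gamma$ directly from the factorization $\phi_{i_{x,y}}=\tau^2_\alpha.\tau_\gamma.\tau^2_\beta$ established in Lemma \ref{lemimport}, using only the fact that $\mathcal{MCG}_G(\sg_g)$ is a genuine subgroup of $\mathcal{MCG}(\sg_g)$ and is therefore closed under composition and inverses. Since $\phi_{i_{x,y}}$ is by construction the homeomorphism associated to the closed loop $i_{x,y}$ based at $G$, it is $G$-surgery compatible, i.e. $\phi_{i_{x,y}}\in\mathcal{MCG}_G(\sg_g)$; this is the first ingredient.

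The second ingredient is the last assertion of Lemma \ref{lemimport}: any simple closed curve meeting $G$ in a single point gives a surgery compatible squared twist. Here the two boundary components $\alpha$ and $\beta$ of the pair of pants $\mathcal{P}(x,y)$ were chosen precisely so that $\alpha$ meets $G$ only at $x$ and $\beta$ meets $G$ only at $y$; hence each has geometric intersection number one with $G$. I would first record this observation explicitly (it is the only geometric point to check), so that Lemma \ref{lemimport} applies and yields $\tau^2_\alpha,\tau^2_\beta\in\mathcal{MCG}_G(\sg_g)$, and consequently $\tau^{-2}_\alpha,\tau^{-2}_\beta\in\mathcal{MCG}_G(\sg_g)$ as well.

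It then remains to solve the relation for the central factor. Multiplying on the left by $\tau^{-2}_\alpha$ and on the right by $\tau^{-2}_\beta$ gives $\tau_\gamma=\tau^{-2}_\alpha.\phi_{i_{x,y}}.\tau^{-2}_\beta$, an identity valid in the group irrespective of whether the three twists commute. As the right-hand side is a product of elements of $\mathcal{MCG}_G(\sg_g)$, so is $\tau_\gamma$, which is exactly the claim. One should only confirm that the curve $\gamma$ in the statement --- the unique simple closed curve meeting $G$ twice, at $x$ and $y$ --- coincides with the boundary component $\gamma$ of $\mathcal{P}(x,y)$ appearing in Lemma \ref{lemimport}; this identification is immediate from the description of $\mathcal{P}(x,y)$.

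I do not expect any serious obstacle here: the argument is purely formal once Lemma \ref{lemimport} is available, and the single substantive verification is the intersection count for $\alpha$ and $\beta$, which is already built into the construction of the pair of pants.
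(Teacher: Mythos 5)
Your proposal is correct and follows exactly the paper's own argument: apply Lemma \ref{lemimport} to get that $\phi_{i_{x,y}}=\tau^2_{\alpha}.\tau_{\gamma}.\tau^2_{\beta}$ is $G$-surgery compatible, note that $\alpha$ and $\beta$ each intersect $G$ once so $\tau^2_{\alpha}$ and $\tau^2_{\beta}$ (and their inverses) are also $G$-surgery compatible, then isolate $\tau_{\gamma}$ by composition. The only difference is presentational --- you spell out the group-theoretic bookkeeping slightly more explicitly than the paper does.
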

\begin{proof}
By Lemma \ref{lemimport}, $\tau^2_{\alpha}.\tau_{\gamma}.\tau^2_{\beta}$ is $G$-surgery  compatible. Since $\alpha$ and $\beta$ intersect $G$ once, $\tau^2_{\alpha}$  and $\tau^2_{\beta}$ are also $G$-surgery  compatible. We obtained $\tau_{\gamma}$ as a $G$-surgery  compatible homeomorphism by composition with $\tau^{-2}_{\alpha}$ and ~$\tau^{-2}_{\beta}$.  
\end{proof}
\begin{corollary}
For $d_0:=(3,3,...,3)$ and $g\geq2$, $\mathcal{MCG}_{d_0,g}(\sg_g)=\mathcal{MCG}(\sg_g)$. In other words, the whole mapping class group is $(d_0,g)$-surgery compatible for all $g\geq2$. 
\end{corollary}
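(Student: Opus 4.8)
The plan is to establish the nontrivial inclusion $\mathcal{MCG}(\sg_g)\subseteq\mathcal{MCG}_{d_0,g}(\sg_g)$, the reverse being immediate since every surgery compatible map is by definition a homeomorphism of $\sg_g$. I would first record two reductions. The set of $G$-surgery compatible homeomorphisms is the image of the map $\mathcal{R}$ on $\pi_1(K_{d_0,g},G)$, hence a subgroup; so to prove $\mathcal{MCG}_G(\sg_g)=\mathcal{MCG}(\sg_g)$ it suffices to show that $\mathcal{MCG}_G(\sg_g)$ contains a generating set of $\mathcal{MCG}(\sg_g)$. Moreover, by Theorem~\ref{K3} the graph $K_{d_0,g}$ is connected, so $\mathcal{MCG}_G(\sg_g)$ depends on the base map only up to conjugacy; since $\mathcal{MCG}(\sg_g)$ is stable under conjugation, it is enough to produce a single cubic unicellular map $G$ with $\mathcal{MCG}_G(\sg_g)=\mathcal{MCG}(\sg_g)$, and the equality with $\mathcal{MCG}_{d_0,g}(\sg_g)$ then follows formally.

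The key input is Corollary~\ref{twist}: for any pair of intertwined edges $x,y$ of $G$, the twist $\tau_\gamma$ along the curve $\gamma$ meeting $G$ exactly at $x$ and $y$ is $G$-surgery compatible. I would combine this with the classical fact, cited above, that $\mathcal{MCG}(\sg_g)$ is generated by finitely many Dehn twists (Lickorish \cite{Lick}, and Humphries' $2g+1$ twists $\tau_{c_1},\dots,\tau_{c_{2g+1}}$, \cite{Humph}). The concrete task is then to choose $G$ together with representatives of the generating curves so that each $c_k$ crosses $G$ in exactly two intertwined edges; granting this, every $\tau_{c_k}$ lies in $\mathcal{MCG}_G(\sg_g)$ and we are done. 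To build such a $G$ I would begin from the one-vertex map dual to the standard $4g$-gon, introduce false vertices on its edges, and split the degree-$4g$ vertex into degree-three vertices as in Lemma~\ref{perfmatch1}; along the way I record where each $c_k$ crosses the resulting cubic graph, using the false-vertex formalism of Section~\ref{sec3} to push each crossing onto a prescribed edge and then reading off from the word $W_G$ whether the two crossings of $c_k$ occur in an intertwined pattern $x<\bar{x}<y<\bar{y}$ (or $x<\bar{y}<y<\bar{x}$), exactly the condition of Lemma~\ref{lemsurg}.

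The main obstacle is this simultaneous realization: arranging one fixed cubic map $G$ against which all $2g+1$ generating curves meet $G$ in two intertwined edges each. Any single twist curve is easy to place in that position, but doing so for the entire generating chain without introducing extra intersection points is the delicate combinatorial step. Here I would exploit the freedom afforded by surgery equivalence, namely that replacing $G$ by any map in the same connected component of $\widetilde{K}_{d_0,g}$ leaves $\mathcal{MCG}_G(\sg_g)$ unchanged, to reduce to the explicit linear handle model of $\sg_g$, in which a generating chain $c_1,\dots,c_{2g+1}$ is manifestly carried by a spine meeting each $c_k$ twice. Once the realization is in place, Corollary~\ref{twist} delivers each $\tau_{c_k}$ as a $G$-surgery compatible homeomorphism, Humphries' theorem guarantees that they generate $\mathcal{MCG}(\sg_g)$, and the connectivity statement of Theorem~\ref{K3} promotes the equality $\mathcal{MCG}_G(\sg_g)=\mathcal{MCG}(\sg_g)$ to $\mathcal{MCG}_{d_0,g}(\sg_g)=\mathcal{MCG}(\sg_g)$, as claimed.
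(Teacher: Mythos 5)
Your overall strategy is the same as the paper's: combine Corollary \ref{twist} with a finite Dehn-twist generating set (Lickorish/Humphries), realize each generating curve as a curve crossing one fixed cubic unicellular map twice at intertwined edges, and then use connectivity of $K_{d_0,g}$ (Theorem \ref{K3}) to promote the statement from one base map $G$ to the parameters $(d_0,g)$. Your two formal reductions --- that $\mathcal{MCG}_G(\sg_g)$ is a subgroup, so it suffices to capture a generating set, and that connectivity makes $\mathcal{MCG}_G(\sg_g)$ well defined up to conjugacy, so a single good base map suffices --- are both correct and are implicit in the paper.

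There is, however, a genuine gap at precisely the step you flag as delicate, and your proposed way around it does not close it. You reduce to a ``linear handle model'' in which the generating chain is ``manifestly carried by a spine meeting each $c_k$ twice.'' But meeting the map twice is not the point; the two crossed edges must be \emph{intertwined}, i.e.\ occur as $x<\bar{x}<y<\bar{y}$ or $x<\bar{y}<y<\bar{x}$ in the cyclic word (Lemma \ref{lemsurg}), and this is a real constraint that fails in natural examples. The paper's own Section \ref{sec4} shows this: on the torus, edges belonging to the meridian are never intertwined with edges belonging to the longitude, and the one-vertex map with word $x_1x_2\cdots\bar{x}_1\bar{x}_2\cdots$ has no intertwined pairs whatsoever, even though many simple closed curves meet these maps exactly twice. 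So a spine that merely meets every $c_k$ twice does not let you invoke Corollary \ref{twist}; you must certify the intertwining of all $2g+1$ (or $3g-1$) crossing pairs simultaneously, against a single word $W_G$. The paper closes exactly this gap with a concrete device you are missing: it builds a cubic map having bridge edges (Figure \ref{humphgen}) and uses the facts that a bridge is intertwined with every edge and that two edges separated by a bridge are intertwined; routing the Lickorish curves through such pairs makes the intertwining condition automatic, with no word computation needed. Your argument becomes a proof once you either import that construction or exhibit your handle-model spine explicitly, write down its word, and check the intertwined pattern for every generator.
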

\begin{proof}
By Corollary ~\ref{twist}, it is sufficient to find a set of simple closed curve ~$\gamma_{x,y}$ generated by intertwined edges, which is homeomorphic to Lickorish's generating set. The graph in dark in Figure ~\ref{humphgen} is a cubic unicellular map in ~$\sg_3$, and it has three bridges. 

\begin{figure}[htbp]
\begin{center}
\includegraphics[scale=0.26]{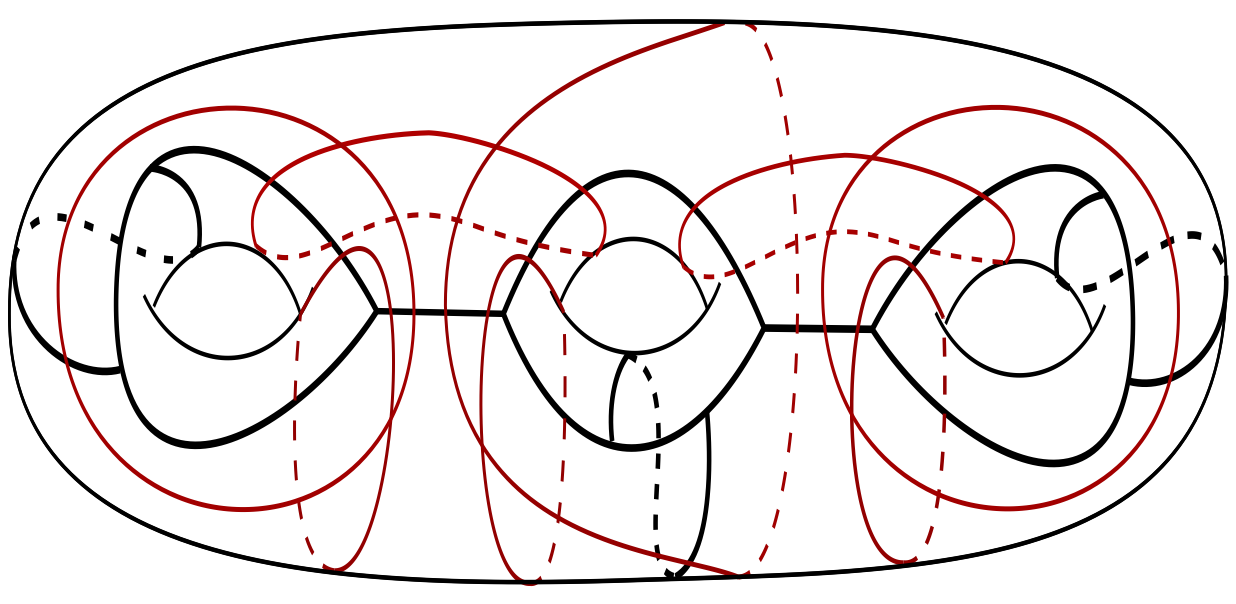}
\caption{Lickorish's generators  obtained by the simple curves generated by intertwined edges.}
\label{humphgen}
\end{center}
\end{figure}

A bridge edge is intertwined to all edges and two edges separate by a bridge are intertwined. The collection of curve in red is such that each curve intersects $G$ twice, at two intertwined edges. The right Dehn twists along these curves are $(d_0,3)$- surgery compatible and the collection is homeomorphic to the standard Lickorish's generators. So, ~$\mathcal{MCG}_{d_0,3}(\sg_3)=\mathcal{MCG}(\sg_3)$ and the proof is the same for all $g\geq2$.      
\end{proof}
\end{paragraph}

Here is the proof of Main theorem.
\begin{proof}[\textbf{Proof of Main theorem}]
Let $G_1$ and $G_2$ be two points in $\widetilde{\mathcal{U}}_{d_0,g}$ ($g\geq2$). Since $K_{d_0,g}$ is connected (Theorem ~\ref{K3}), there is a path from $G_1$ to $\phi(G_2)$. Since all homeomorphisms are $(d_0,g)$-surgery compatible, there is a path from $\phi(G_2)$ to $G_2$. So, $\widetilde{K}_{d_0,g}$ is connected.  

For the quasi-isometry between $\mathcal{MCG}(\sg_g)$ and $\widetilde{K}_{d_0,g}$, we check \v{S}varc-Milnor lemma's conditions. 

The graph $\tilde{K}_{d_0,g}$ is a proper length space (edges have length equal to one). The action of $\mathcal{MCG}(\sg_g)$ is co-compact since the graph $K_{d_0,g}$ is compact. For every $G\in\widetilde{K}_{d_0,g}$, $|\mathrm{Fix}(G)\leq\infty|$ since $G$ as finite number of edges. Therefore, the action is properly discontinuous. So, $\mathcal{MCG}(\sg_g)$ is quasi-isometric to $\widetilde{K}_{d_0,g}$.  
\end{proof}

\begin{remark} In general, $\mathcal{MCG}_G(\sg_g)$ is quasi-isometric to $C(G)$, where $C(G)$ is the connected component of $\widetilde{K}_{d,g}$ containing $G$. 
\end{remark}

One could think that the equality $\mathcal{MCG}_{d_0,g}(\sg_g)=\mathcal{MCG}(\sg_g)$ for $g\geq2$ is due to the fact that cubic unicellular maps have the maximal number of edges. The unicellular map $G$ in Figure \ref{humph2} is of degree partition $d=(5,5)$ and we obtained the Lickorish's generators with the simple closed curves generated by intertwined edges in $G$. It implies that each connected component of $\widetilde{K}_{d,2}$ is stable under the action of the mapping class group. But, by tracking  the Humphries generators, one can generate the whole mapping class group with $G\in\widetilde{\mathcal{U}}_{d,g}$ for $d=(5,5,6,6...,6)$. To see this, we start with the analogue of the map $G$ depicted in Figure ~\ref{humphgen}. It gives us the $3g-1$ Lickorish's generators. We remove $g-2$ curves form that generating set so that we obtained the Humphries generators. We then collapse all edges which are disjoint from the remaining generating set.     

\begin{figure}[htbp]
\begin{center}
\includegraphics[scale=0.25]{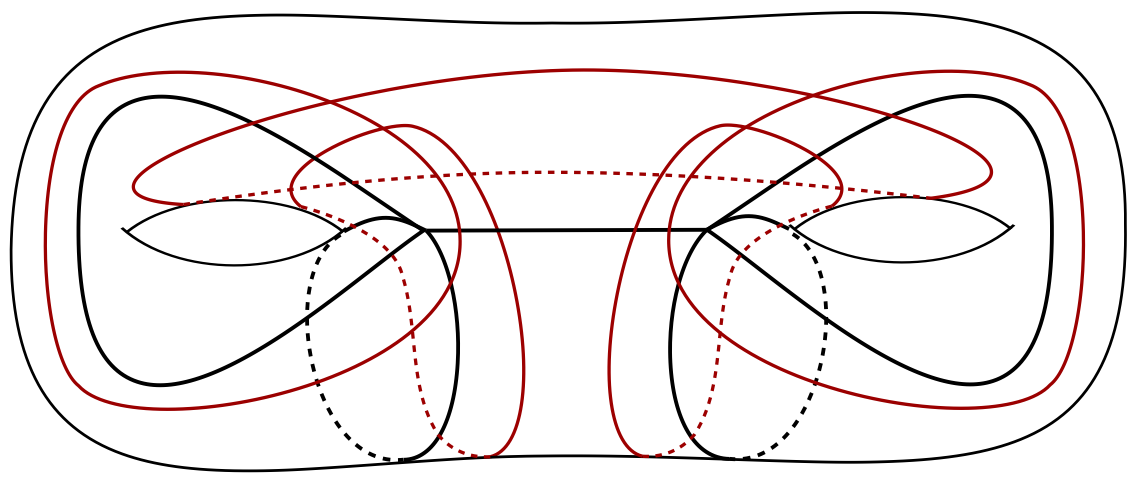}\hspace{1,5cm}
\caption{Lickorish's generators obtained form intertwined edges.}
\label{humph2}
\end{center}
\end{figure}

\begin{paragraph}{Surgery invariant\string:} Now, we define a surgery invariant for the proof of Theorem \ref{inv}.

\noindent Let $G$ be a unicellular map, $\mathcal{S}_g$ the set of all simple closed curves in $\sg_g$ and ~$I_{G}$ the map associated to $G$ defined by\string:

 \begin{align*}
I_{G}:\mathcal{S}_g&\longrightarrow \z/2\z\\
                     \alpha &\longmapsto i(G,\alpha)\mod 2;
\end{align*}

where $i(G,\alpha)$ is the geometric intersection number\string: the minimal number of time $\alpha$ intersects $G$ in its isotopy class. 

\begin{Prop}[\textit{Invariant}]
Let $G$ be a unicellular map whose vertices have even degree. If $G'$ is obtained by a surgery on $G$, then 
\[I_{G'}=I_{G}.\] 
\end{Prop}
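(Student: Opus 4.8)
The plan is to recognize $I_G$ as a purely homological quantity, for which the even-degree hypothesis is exactly the feature that makes it well defined. Taking every edge of $G$ with coefficient $1$, the $\z/2\z$-boundary at a vertex $v$ is the number of edge-ends at $v$ reduced $\bmod\, 2$, which vanishes precisely when $\deg(v)$ is even. So when $d$ is even the chain $G$ is a $\z/2\z$-cycle and determines a class $[G]\in H_1(\sg_g;\z/2\z)$. The same holds for $G'$: a surgery is performed between two oriented edges and therefore leaves the degree partition invariant, so $G'$ is again even and $[G']$ is defined. The whole proposition will reduce to the statement $[G]=[G']$.

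First I would show that $I_G$ depends only on the class $[G]$, namely
\[ I_G(\alpha)=\langle [G],[\alpha]\rangle \quad\text{for every } \alpha\in\mathcal{S}_g,\]
where $\langle\cdot,\cdot\rangle$ is the $\z/2\z$-intersection pairing on $H_1(\sg_g;\z/2\z)$. The key fact is that for any representative $\alpha'$ of the isotopy class of $\alpha$ transverse to $G$, the number of intersection points $|\alpha'\cap G|$ reduced $\bmod\,2$ computes $\langle [G],[\alpha]\rangle$, and this parity is a homology (hence homotopy) invariant of the representative. Since $i(G,\alpha)$ is the minimum of $|\alpha'\cap G|$ over all such representatives, and every transverse count has the same parity $\langle [G],[\alpha]\rangle$, the minimum inherits that parity. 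Thus $I_G$ is entirely encoded by $[G]$.

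It then remains to prove $[G']=[G]$ in $H_1(\sg_g;\z/2\z)$. Here I would use that the surgery $\sigma_{x,y}$ is a local modification of $G$ inside a disk $D$, namely a regular neighborhood of the arc $\lambda_{x,y}$ together with the small portions of the edges $x$ and $y$ to which it is attached (Figure \ref{cutop}). Outside $D$ the graphs $G$ and $G'$ coincide as subsets of $\sg_g$, so the $\z/2\z$-chain $G+G'$ is supported in $\overline{D}$. Because both $G$ and $G'$ are cycles, $\partial(G+G')=\partial G+\partial G'=0$, so $G+G'$ is a $1$-cycle contained in $\overline{D}$. As $H_1(\overline{D};\z/2\z)=0$, this cycle bounds in $\overline{D}$ and is therefore null-homologous in $\sg_g$, giving $[G]=[G']$. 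Combined with the previous step this yields $I_{G'}=\langle[G'],\cdot\rangle=\langle[G],\cdot\rangle=I_G$.

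The heart of the argument is the first reduction: the observation that the even-degree hypothesis turns $G$ into a $\z/2\z$-cycle and that $I_G$ is then nothing but the homological intersection functional $\langle[G],\cdot\rangle$, after which the conclusion is automatic. The step I expect to require the most care is $[G']=[G]$: one must read off from the definition of $\sigma_{x,y}$ that the modification is genuinely confined to a disk neighborhood of $\lambda_{x,y}$ (so that $G+G'$ is supported in a disk), and one must invoke the invariance of the degree partition to know that $G'$ is still even, so that $[G']$ is defined in the first place. Everything else is the standard fact that $\bmod\,2$ intersection numbers are homological invariants.
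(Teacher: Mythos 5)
Your proof is correct, and it takes a genuinely different (more structural) route than the paper's. The paper argues directly with intersection counts: it takes a representative of $\alpha$ realizing $i(G,\alpha)=k$, observes that the surgery along $\lambda_{x,y}$ creates exactly $2n$ new intersection points (where $n=|\lambda_{x,y}\cap\alpha|$), so $|G'\cap\alpha|=k+2n$, and then invokes the fact that, because $G'$ has even degree partition, $i(G',\alpha)$ has the same parity as any transverse count $k+2n$. That last step is stated without proof in the paper, and it is precisely the homological principle you isolate and make explicit: even degree means $G$ is a $\z/2\z$-cycle, so $I_G(\alpha)=\langle[G],[\alpha]\rangle$ is determined by the class $[G]\in H_1(\sg_g;\z/2\z)$, and parity of transverse counts is a homological invariant. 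Where the paper tracks the $2n$ new crossings explicitly, you instead note that $G+G'$ is a cycle supported in a disk (a regular neighborhood of the tree $\lambda_{x,y}$ together with the attached segments of $x$ and $y$), hence null-homologous, so $[G]=[G']$. Your version buys rigor exactly where the paper is terse, and it localizes the role of the even-degree hypothesis (well-definedness of $[G]$); the paper's version buys concreteness, exhibiting the precise intersection count after surgery rather than only its parity. Both arguments are sound and rest on the same underlying fact of mod-2 intersection theory.
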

\begin{proof}
Let $\alpha$ be a simple curve in $\sg_g$ and $i(G,\alpha):=k$. Let $x$ and ~$y$ be two intertwined edges of $G$ and $\lambda_{x,y}$ the arc from $x$ to $y$. It follows that after the surgery, $$|G'\cap\alpha|=k+2n;$$ where $n$ is the number of time $\lambda_{x,y}$ intersects $\alpha$. Since $G'$ has even degree partition, $i(G',\alpha)$ and $k+2n$ have the same party. 

So, $i(G,\alpha)=i(G',\alpha)\mod 2$. 
\end{proof}

With the proposition above, we provide many Dehn twists which are not $G$-surgery  compatible for even degree partitions.
\begin{corollary}\label{noncomp}
Let $G$ be a unicellular map with even degree partition and ~$\alpha$ be a simple curve such that $i(G,\alpha)$ is odd. Then, the Dehn twist $\tau_{\alpha}$ is not $G$-surgery  compatible; that is $\mathcal{MCG}_G(\sg_g)$ is a proper subgroup of ~$\mathcal{MCG}(\sg_g)$. 
\end{corollary}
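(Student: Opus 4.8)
The plan is to argue by contradiction, leaning entirely on the surgery invariant of the preceding Proposition. Suppose $\tau_\alpha$ were $G$-surgery compatible. By definition this means there is a finite sequence of surgeries $G=G_0\to G_1\to\cdots\to G_m=\tau_\alpha(G)$. Since surgeries preserve the degree partition, each $G_i$ still has even degree partition, so I can apply the Invariant Proposition at every step and conclude $I_G=I_{G_0}=I_{G_1}=\cdots=I_{G_m}=I_{\tau_\alpha(G)}$. It then suffices to exhibit one simple closed curve $\beta$ with $i(G,\beta)\not\equiv i(\tau_\alpha(G),\beta)\bmod 2$, i.e. $I_G(\beta)\neq I_{\tau_\alpha(G)}(\beta)$, to reach a contradiction.

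The key step is to reinterpret $I_G$ homologically. Because every vertex of $G$ has even degree, the formal sum of all edges of $G$ is a $1$-cycle over $\z/2\z$: its boundary is $\sum_v \deg(v)\,v\equiv 0 \bmod 2$. Thus $G$ carries a well-defined class $[G]\in H_1(\sg_g;\z/2\z)$, and for a simple closed curve $\beta$ transverse to $G$ the crossing count $|G\cap\beta|$ reduces mod $2$ to the $\z/2\z$-intersection pairing $\langle[G],[\beta]\rangle$. I would then note that the even-degree hypothesis is precisely what makes this parity insensitive to isotoping $\beta$ into minimal position: pushing $\beta$ across a vertex of degree $d$ changes the crossing count by $d-2j\equiv d\equiv 0$, while removing a bigon changes it by $2$. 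Hence $I_G(\beta)=i(G,\beta)\bmod 2=\langle[G],[\beta]\rangle$, and the same description applies to $\tau_\alpha(G)$, whose degree partition is again even, so that $I_{\tau_\alpha(G)}=\langle (\tau_\alpha)_*[G],-\rangle$.

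What remains is a short linear-algebra computation on $H_1(\sg_g;\z/2\z)$. The transvection formula for a Dehn twist gives $(\tau_\alpha)_*[c]=[c]+\langle[\alpha],[c]\rangle[\alpha]$, so $[\tau_\alpha(G)]=[G]+\langle[\alpha],[G]\rangle[\alpha]=[G]+[\alpha]$, using that $\langle[\alpha],[G]\rangle=i(G,\alpha)\equiv 1\bmod 2$ by hypothesis. Since $\langle[G],[\alpha]\rangle=1$ forces $[\alpha]\neq 0$, non-degeneracy of the $\z/2\z$-intersection form supplies a simple closed curve $\beta$ with $\langle[\alpha],[\beta]\rangle=1$; for this $\beta$ one gets $I_{\tau_\alpha(G)}(\beta)=I_G(\beta)+\langle[\alpha],[\beta]\rangle\neq I_G(\beta)$. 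This contradicts $I_G=I_{\tau_\alpha(G)}$, whence $\tau_\alpha$ is not $G$-surgery compatible and $\mathcal{MCG}_G(\sg_g)$ is a proper subgroup of $\mathcal{MCG}(\sg_g)$.

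The step I expect to be the main obstacle is the justification that $I_G(\beta)$ equals the mod-$2$ homological pairing and not merely the parity of some transverse count, i.e. that minimizing the intersection number cannot flip its parity. This is exactly where the even-degree hypothesis is genuinely used a second time (to control crossings being pushed over a vertex), and it deserves to be isolated as a small lemma. Alternatively, one can avoid homology language altogether by fixing at the outset a simple closed curve $\beta$ with $i(\alpha,\beta)=1$ and computing $i(\tau_\alpha(G),\beta)=i(G,\tau_\alpha^{-1}(\beta))$ directly; but the bookkeeping of the crossings created near $\alpha$ is really the transvection formula in disguise, so I would favor the homological formulation.
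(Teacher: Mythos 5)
Your proof is correct and follows essentially the same route as the paper: both arguments rest on the Invariant Proposition (the paper invokes it implicitly, you make the step-by-step invariance along the surgery sequence explicit), both note that $i(G,\alpha)$ odd forces $\alpha$ to be nonseparating, both choose a simple closed curve $\beta$ meeting $\alpha$ once, and both conclude by a parity contradiction at $\beta$. The only real difference is how the parity change is computed: the paper asserts the identity $i(\tau_{\alpha}(G),\beta)=i(G,\beta)+i(G,\alpha)$ outright, whereas you derive its mod-$2$ content from the transvection formula $(\tau_{\alpha})_*[G]=[G]+\langle[\alpha],[G]\rangle[\alpha]$, after observing that the even degree partition makes $G$ a $\z/2\z$-cycle and $I_G$ the homological pairing. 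Your version is in fact the more careful one: as an equality of geometric intersection numbers the paper's formula is not justified (and in general such twist formulas hold only up to a bounded error term), but it is true mod $2$, which is all that is needed and is exactly what your homological computation establishes; your isolation of the claim that $i(G,\beta)\bmod 2$ equals the homological pairing---using evenness a second time to control isotopies across vertices---is the same parity argument the paper uses inside the proof of the Invariant Proposition itself.
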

\begin{proof}
Since $i(G,\alpha)$ is odd, then $\alpha$ is non separating. Therefore, there is a a simple curve $\beta$ which intersects $\alpha$ once. But, $i(\tau_{\alpha}(G),\beta)=i(G, \beta)+i(G, \alpha)$. It follows that $i(G,\beta)$ and $i(\tau_{\alpha}(G),\beta)$ have different parity. Hence, $\tau_{\alpha}$ is not $G$-surgery  compatible.  
\end{proof}

Given a unicellular map $G$ with even degree partition, The map $I_{G}$ is completely determined by its valued on a (symplectic) basis of  the first homology group of $\sg_g$. Therefore, $I_{G}$ is determined by a vector of size $2g$ with coordinates ~$0$ or ~$1$. The invariant $I$ defines $3g$ different classes.  

Figure \ref{diff} shows two different unicellular collections $G_1$ and $G_2$ with different associated maps\string: $I_{G_1}=(1,1,1,1)$ and $I_{G_2}=(1,0,1,1)$. So, they are in different connected components of $\widetilde{K}_{(4,4,4),2}$. 
\begin{figure}[htbp]
\begin{center}
\includegraphics[scale=0.22]{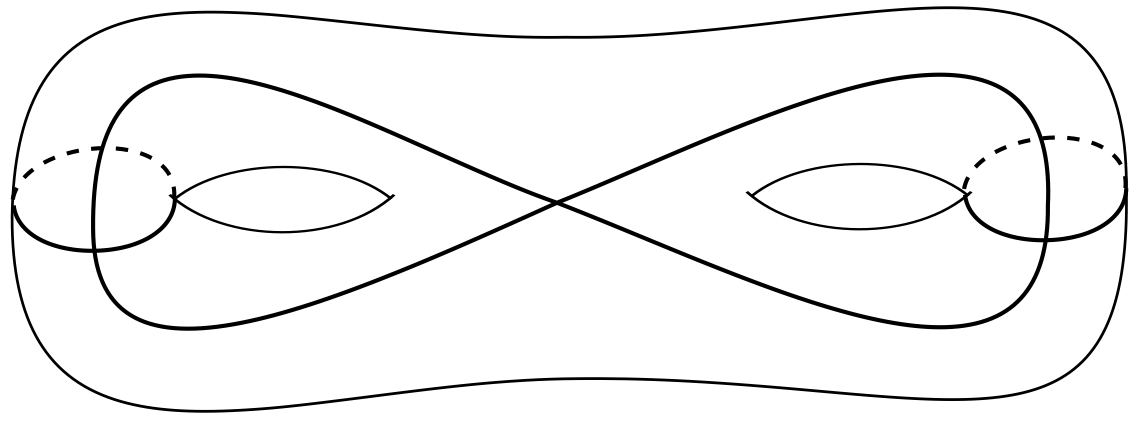}\hspace{1cm}
\includegraphics[scale=0.22]{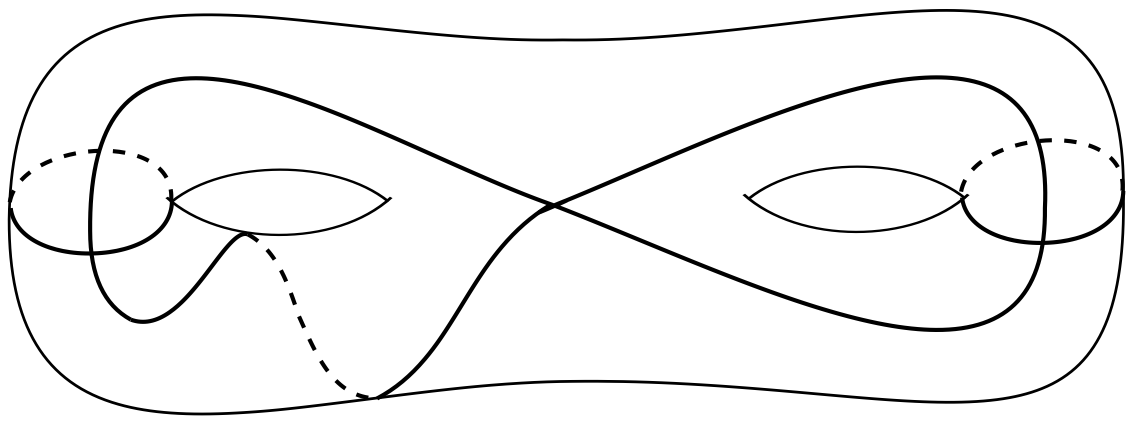}
\caption{Two unicellular collections in different connected components of $\tilde{K}_{(4,4,4),2}$.}
\label{diff}
\end{center}
\end{figure}

There is another interesting subgroup associated to a unicellular map of even degree partition $G$\string: \[\mathcal{MCG}_{I_G}(\sg_g)=\{\phi\in\mathcal{MCG}(\sg_g), I_{G}=I_{\phi(G)}\}.\]
The group $\mathcal{MCG}_G(\sg_g)$ is a subgroup of $\mathcal{MCG}_{I_G}(\sg_g)$.

Now we give the proof of Theorem \ref{inv}.
\begin{proof}[\textbf{Proof of theorem 3}] 

By Corollary \ref{noncomp}, Dehn twist along simple curve which intersects $G$ ---with even degree partion---, are not $G$-surgery compatible. It follows that $\mathcal{MCG}_G(\sg_g)$ is a proper subgroup.  
\end{proof}

The graph $K_{d_1,g}$ is disconnected but each connected component contains a $\mathcal{MCG}_{d_1,g}(\sg_g)$ orbit of any combinatorial map $[G]\in K_{d_1,g}$ since $K_{d_1,g}$ is connected. Theorem ~\ref{inv} implies that the question of connectedness of combinatoric surgery graphs on unicellular maps of even partition is purely combinatoric. 
\end{paragraph}

\begin{figure}[htbp]
\begin{center}
\includegraphics[scale=0.17]{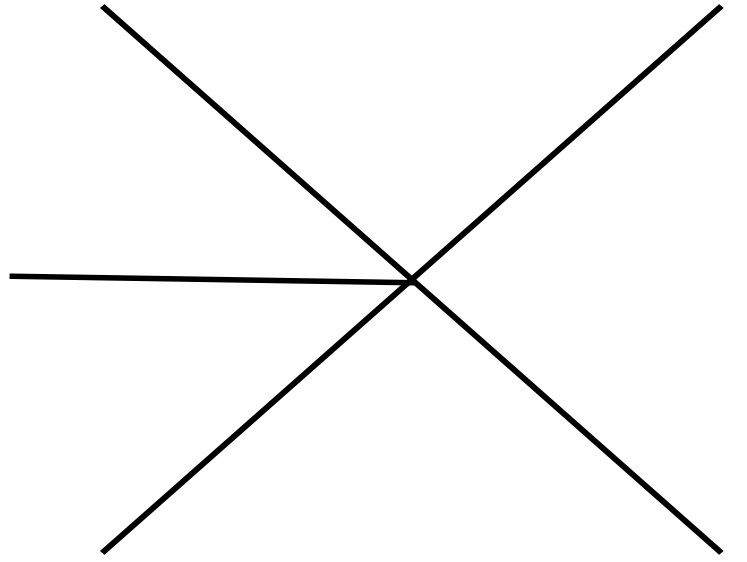}\hspace{3cm}
\includegraphics[scale=0.17]{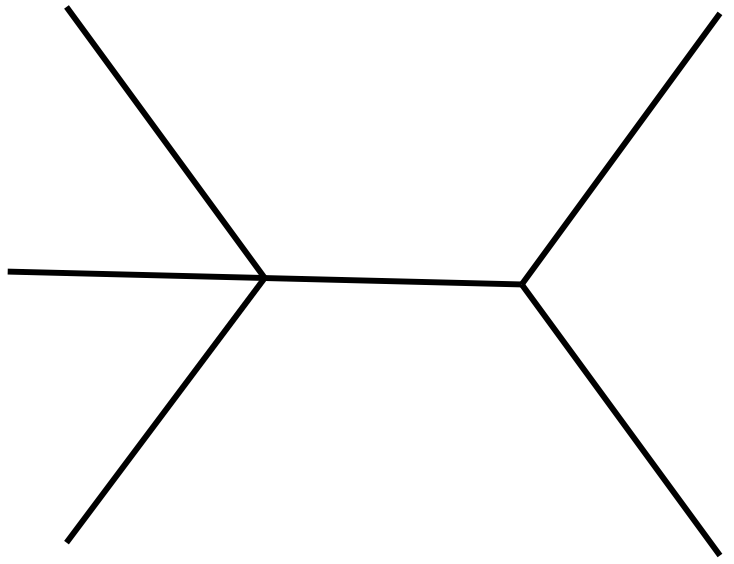}
\put(-43,7){\Large{$\longrightarrow$}}
\caption{Splitting a vertex of degree five into two vertices.}
\label{split}
\end{center}
\end{figure}
 
\begin{paragraph}{Filtration of $\mathcal{MCG}(\sg_g)$ by splitting vertex sequence\string:} We added more to the ubiquity of unicellular maps by showing that  a unicellular map $G$ parametrizes a subgroup of mapping class group\string: the subgroup $\mathcal{MCG}_G(\sg_g)$. The splitting operation on vertices (see Figure \ref{split}) gives a natural inclusion map. In fact, if ~$v$ is a vertex of a unicellular map $G$ with degree greater than $3$, one can split ~$v$ into two to obtained a new map $G'$. A surgery on $G$ has an equivalent in $G'$ by just forgetting new edges created by the splitting operation. This shows that $G$-surgery compatible homeomorphism, defined by a loop of surgeries in $G$, is also a $G'$-surgery compatible homeomorphism\string:
\[G\overset{split}{\longrightarrow} G'\quad\implies\quad \mathcal{MCG}_G(\sg_g) \hookrightarrow \mathcal{MCG}_{G'}(\sg_g).\]

We then defined a filtration of $\mathcal{MCG}(\sg_g)$ as follow\string: we start with a unicellular map $G$ with only one vertex and we split step by step vertices till we obtained a cubic unicellular map. To this sequence $G_0=G\rightarrow...\rightarrow G_n$ follows a filtration\string:
\[\mathcal{MCG}_G(\sg_g)\hookrightarrow\mathcal...\hookrightarrow \mathcal{MCG}_{G_{n-1}}(\sg_g)\hookrightarrow \mathcal{MCG}(\sg_g)\]  
\end{paragraph} 
\end{section}
\begin{section}{Two examples}\label{sec4}
\begin{paragraph}{The case of torus\string:} 
 Let us consider $\widetilde{\mathcal{U}}_{(4),1}$\string: the set of unicellular maps on ~$\mathbb{T}^2$ with one vertex of degree $4$. The geometric intersection between $G\in\widetilde{\mathcal{U}}_{(4),1}$ and the two curves of the symplectic basis can not be even at the same time. Therefore, the invariant $I$ defines three different classes\string: 
\[(0,1);\quad (1,0);\quad (1,1).\]
Let $G_0$ be the unicellular maps on $\mathbb{T}^2$ made by the meridian curve $\alpha$ and the longitude $\beta$. Let $\phi_0$ be the homeomorphism of $\mathbb{T}^2$ which maps $\alpha$ to $\beta$ and vice versa; $\phi_0^2=\mathrm{Id}$. 
To $G\in\widetilde{\mathcal{U}}_{(4),1}$, we associate the following word\string:
\[x_1x_2y_1y_2\bar{x}_2\bar{x}_1\bar{y}_2\bar{y_1}.\]    

Then, oriented edges which are not in the same curve are not intertwined. Therefore, $\mathcal{MCG}_G(\mathbb{T}^2)=\langle \tau^2_{\alpha},\tau^2_{\beta}\rangle$. So, $\phi_0$ is not $G$-surgery compatible since $\mathcal{MCG}_G(\mathbb{T}^2)$ is free; even though $\phi_0\in \mathcal{MCG}_{I_{G_0}}(\mathbb{T}^2)$. It means that in this case $\mathcal{MCG}_{d_0,1}(\mathbb{T}^2)$ is a proper subgroup of $\mathcal{MCG}_{I_{G_0}}(\mathbb{T}^2)$, but up to $\phi_0$, $\mathcal{MCG}_{I_{G_0}}(\mathbb{T}^2)$ is equal to $\mathcal{MCG}_{d_0,1}(\mathbb{T}^2)$.  
\end{paragraph}
\begin{paragraph}{The partition $d=(4g)$ and $d=(2g+1, 2g+1)$\string:} We chose these two partition because they are examples for which the combinatorial surgery graph is not connected. Let $[G]$ be combinatorial map on $\sg_g$ defined by $W_{G}=x_1x_2...x_g\bar{x}_1\bar{x}_2...\bar{x}_g$. The map $G$ has one vertex and there are not pair of intertwined edges. So the only elementary $G$-surgery compatible homeomorphism are square of Dehn twists along simple curves $\alpha_{x_i}$ supported by the edges, and $[G]$ is an isolated point in $K_{(4g),g}$. Moreover, a reduced sequence of surgeries (two consecutive surgeries do not cancel each other) acts non trivially on $G$. In fact, at the $k$-th step of the sequence of surgeries, the arc along which the surgery is performed follows the meanders of the previous one since $x_i< x_j<\bar{x}_i<\bar{x}_j$ for all $i<j$; that is the map $G$ gets more complicated.  So,
\[\mathcal{MCG}_G(\sg_g)=\langle \tau^2_{\alpha_{x_1}},...,\tau^2_{\alpha_{x_g}}\rangle\simeq F_g.\]

Let $[G']$ be the map defined by the word $W_{G'}=x_1x_2\bar{x}_1\bar{x}_2...x_{g-1}x_g\bar{x}_{g-1}\bar{x}_g$. The map $[G']$ has also one vertex with many intertwined oriented edges. The simple curves $\alpha_1$ and  $\alpha_2$ generated by the intertwined pairs $\{x_1,x_4\}$ and $\{x_2,x_3\}$ are disjoint. So, $\langle \tau_{\alpha_1}, \tau_{\alpha_2}\rangle$ is a subgroup of $\mathcal{MCG}_{G'}(\sg_g)$ isomorphic to $\mathbb{Z}^2$. Therefore, $G$ and $G'$ parametrize different subgroups of $\mathcal{MCG}(\sg_g)$.

For $d=(2g+1,2g+1)$, the same thing happens. The graph $K_{d,g}$ is not connected ---the combinatorial map $[G]$ defined by  $x_1x_2...x_{g+1}\bar{x}_1\bar{x}_2...\bar{x}_{g+1}$ has two vertices and is isolated in $K_{d,g}$. Moreover, 
\[\mathcal{MCG}_G(\sg_g)=\langle \tau^2_{\alpha_{x_1}},...,\tau^2_{\alpha_{x_{g+1}}}\rangle=F_{g+1}.\] 

The map $G'$ defined by $x_1x_2\bar{x}_1\bar{x}_2...x_{g-1}x_g\bar{x}_{g-1}\bar{x}_g$ defines a proper subgroup different to $\mathcal{MCG}_G(\sg_g)$ except that for $g=2$, $\mathcal{MCG}_{G'}=\mathcal{MCG}(\sg_g)$ ---the simple curves generated by intertwined edge gives Lickorish's generetors. 
\end{paragraph}
\end{section}      
\vspace{0,5cm}    
Institut Fourier, Université Grenoble Alpes.\\
\textit{email\string: karimka02@hotmail.fr}
\end{document}